\def\today{July 2012}
\def\@evenfoot{\rule{0pt}{20pt}[\today] \hfill}
\def\@oddfoot{\rule{0pt}{20pt}\hfill [\today]}
\title[Crossed intervals and the Hochschild]%
       {Crossed interval groups and operations on the Hochschild cohomology}
\author[M.~Batanin and M.~Markl]{Michael Batanin and Martin~Markl}
\thanks{The first author gratefully acknowledges for the financial
support of Scott Russel Johnson Memorial Foundation, Max Plank
Institut f\"{u}r Mathematik and Australian Research Council (grant
No.~DP0558372). The second author was supported by the grant GA \v CR
201/08/0397 and RVO: 67985840.}
\address{Macquarie University, North Ryde, 2109, NSW, Australia}
\email{mbatanin@ics.mq.edu.au}
\address{Mathematical Institute of the Academy, {\v Z}itn{\'a} 25,
         115 67 Prague 1, The Czech Republic}
\email{markl@math.cas.cz}
\keywords{Crossed interval group, Hochschild cohomology, natural operation} 
\subjclass{Primary 55U10, secondary 55S05, 18D50.}
\newtheorem{theorem}{Theorem}[section]
\newtheorem{lemma}[theorem]{Lemma}
\newtheorem{proposition}[theorem]{Proposition}
\newtheorem*{theoremA}{Theorem~A}
\newtheorem*{theoremA'}{Theorem~A'}
\newtheorem*{theoremB}{Theorem~B}
\newtheorem*{theoremC}{Theorem~C}
\theoremstyle{definition}
\newtheorem{example}[theorem]{Example}
\newtheorem{remark}[theorem]{Remark}
\newtheorem{variants}[theorem]{Variants}
\newtheorem{definition}[theorem]{Definition}
\newtheorem*{warning}{Warning}
\newtheorem*{notation}{Notation}
\DeclareMathOperator{\Hom}{Hom}
\DeclareMathOperator{\Lin}{Lin}
\DeclareMathOperator{\Aut}{Aut}
\begin{document}
\baselineskip18pt plus 1pt minus 1pt

\bibliographystyle{plain}

 \def\Latnic{{\EuScript L}} 
\def\Lat#1{{\EuScript L_{(#1)}}}
\def\white{\thicklines
{% Picture saved by xtexcad 2.4
\unitlength=.700000pt
\begin{picture}(50.00,35.00)(-10.00,12.00)
\put(25.00,11.00){\makebox(0.00,0.00){\scriptsize $\cdots$}}
\put(20.00,30.00){\makebox(0.00,0.00){$\circ$}}
\put(40.00,10.00){\line(-1,1){18.00}}
\put(10.00,10.00){\line(1,2){9.00}}
\put(0.00,10.00){\line(1,1){18.00}}
\end{picture}}
}

\def\rwhite{\thicklines
{% Picture saved by xtexcad 2.4
\unitlength=.700000pt
\begin{picture}(80.00,35.00)(-35.00,-15)
\put(0.00,0.00){\makebox(0.00,0.00){$\bullet$}}
\put(-23.00,-15.00){\makebox(0.00,0.00)[rb]{\scriptsize $n\!\!-\!\!1$}}
\put(-20.5,-20.50){\makebox(0.00,0.00){\white}}
\put(0.00,0.00){\line(1,-1){36.00}}
\put(0.00,0.00){\line(0,1){19.00}}
\put(0.00,0.00){\line(-1,-1){17.00}}
\end{picture}}
}

\def\lwhite{\thicklines
{% Picture saved by xtexcad 2.4
\unitlength=.700000pt
\begin{picture}(80.00,35.00)(-35.00,-15)
\put(23.00,-15.00){\makebox(0.00,0.00)[lb]{\scriptsize $n\!\!-\!\!1$}}
\put(0.00,0.00){\makebox(0.00,0.00){$\bullet$}}
\put(18.00,-20.00){\makebox(0.00,0.00){\white}}
\put(0.00,0.00){\line(-1,-1){36.00}}
\put(0.00,0.00){\line(0,1){19.00}}
\put(0.00,0.00){\line(1,-1){17.00}}
\end{picture}}
}

\def\cwhite{\thicklines
{% Picture saved by xtexcad 2.4
\unitlength=.700000pt
\begin{picture}(100.00,35.00)(-45.00,-15)
\put(0.00,4.00){\makebox(0.00,0.00){$\circ$}}
\put(8,9.00){\makebox(0.00,0.00)[l]{\scriptsize $n\!\!-\!\!1$}}
\put(6,-11){\makebox(0.00,0.00){\scriptsize $i$}}
\put(-2.00,2.00){\line(-1,-1){38.00}}
\put(-1.00,2.00){\line(-3,-4){28.5}}
\put(2.00,2.00){\line(1,-1){38.00}}
\put(0.00,8.00){\line(0,1){15.00}}
\put(0.00,1.00){\line(0,-1){19.00}}
\put(0.00,-19.00){\makebox(0.00,0.00){$\bullet$}}
\put(0.00,-19.00){\line(1,-2){8.50}}
\put(0.00,-19.00){\line(-1,-2){8.50}}
\put(25.50,-39.00){\makebox(0.00,0.00)[b]{\scriptsize $\cdots$}}
\put(-17.50,-39.00){\makebox(0.00,0.00)[b]{\scriptsize $\cdots$}}
\end{picture}}
}

\def\cwhitestub{\thicklines
{% Picture saved by xtexcad 2.4
\unitlength=.6pt
\begin{picture}(100.00,35.00)(-45.00,-15)
\put(0.00,4.00){\makebox(0.00,0.00){$\circ$}}
\put(8,9.00){\makebox(0.00,0.00)[l]{\scriptsize $n\!\!+\!\!1$}}
\put(6,-12){\makebox(0.00,0.00){\scriptsize $i$}}
\put(-2.00,2.00){\line(-1,-1){38.00}}
\put(-1.00,2.00){\line(-3,-4){28.5}}
\put(2.00,2.00){\line(1,-1){38.00}}
\put(0.00,8.00){\line(0,1){15.00}}
\put(0.00,1.00){\line(0,-1){35.00}}
\put(0.00,-35){\makebox(0.00,0.00){$\bullet$}}
\put(22.50,-39.00){\makebox(0.00,0.00)[b]{\scriptsize $\cdots$}}
\put(-13.50,-39.00){\makebox(0.00,0.00)[b]{\scriptsize $\cdots$}}
\end{picture}}
}

\def\calU{{\mathcal U}}
\def\bbN{{\mathbb N}} \def\epi{\twoheadrightarrow}
\def\Frmu{\Fr}\def\Norm{{\rm Nor\/}} \def\NormB{\Norm(\Big)}
\def\NormT{\Norm(\Tam)}
\def\End{{{\mathcal E}\hskip -.1em {\it nd}}}
\def\END{{\rm End}}\def\uV{{\underline V}}\def\omu{{(\O,\mu)}}
\def\fmu{{\Fr}}\def\pom{{\rada{u_1o_1}{u_no_n}}}
\def\bs{\hbox{$\EuScript N$}}
\def\F{{\EuScript F}} \def\D{{\mathcal D}}\def\uD{{\underline {\D}}}
\def\Tot{{\rm Tot\/}} \def\SB{{\EuScript B}} \def\Span{{\it Span}}
\def\U{{\EuScript U}} \def\uTot{\overline{\rm Tot\/}} 
\def\UAss{\hbox{$U \hskip -.2em  \it{\mathcal A}ss$}}
\def\sUAss{\hbox{\scriptsize $U \hskip -.2em  \it{\mathcal A}ss$}}
\def\sAss{\hbox{\scriptsize$\it{\mathcal A}ss$}}
\def\lTot{\underline{\rm Tot\/}} \def\bara{{\overline a}}
\def\C{{\EuScript C}} \def\bbZ{{\mathbb Z}}
\def\bbT{{\mathbb T}}
\def\H{{\EuScript H}} \def\Dual{{\it Dual}} \def\calP{{\mathcal P}}
\def\Br{{\EuScript Br}} \def\barg{{\overline g}}
\def\Z{{\EuScript Z}} \def\MO{{\it MO}} \def\SYMCAT{{\rm SymCat}_\bullet}
\def\Flip{{\EuScript Flip}} \def\Nat{{\EuScript Nat}}\def\Symcat{\SYMCAT}
\def\NAT{{\check B}}\def\B{{B}}
\def\MultSO{{\rm MultSO}}\def\SymCat{\SYMCAT}\def\SO{{\rm SO}}
\def\S{{\EuScript S}}\def\O{{\EuScript O}}\def\P{{\EuScript P}}
\def\SC{{\mathit C}}\def\Fr{\EuScript Fr}
\def\Dis{{\EuScript D}}\def\mezi#1#2{{1 \leq #1 \leq #2}}
\def\pa{\partial}\def\id{{\mathrm {{id}}}}
\def\I{\EuScript I}\def\int#1{{\langle #1 \rangle}}
\def\rada#1#2{{#1,\ldots,#2}} \def\op{{\rm op}}
\def\bar#1{{B_{#1}^{\mbox{\scriptsize $A$-$A$}}(A)}}
\def\ot{\otimes} \def\otexp#1#2{{#1}^{\otimes #2}}
\def\Hoch{{\rm Hoch}}  \def\Big{{\mathcal B}}
\def\open#1{\stackrel{\raisebox{-.2em}{\scriptsize\int {k-1},\int{l-1} $\mathrm o$}}{#1}}
\def\AbiMod{\mbox{$A$-{\tt biMod}}} \def\joy{\mathit{joy}}
\def\IS{{\I S}} \def\kMod{\mbox{$\mathbf k$-{\tt Mod}}}
\def\Rada#1#2#3{#1_{#2},\dots,#1_{#3}}
\def\CH#1{C^{#1}(A;A)}
\def\Tam{{\mathcal T}} \def\Br{{\mathcal B}r} \def\KS{{\mathcal M}}
\def\BN{{\widehat \Big}}\def\TN{{\widehat \Tam}}
\def\NBr{\widehat{\Br}}
\def\exeptional{$\ \unitlength 1em \thicklines\line(0,1){1}\ $}
\def\stub{\ 
\unitlength 1em 
\begin{picture}(0,1)
\thicklines
\put(0,0){\line(0,1){1}}
\put(0,0){\makebox(0,0){$\bullet$}}
\end{picture}
}
\def\uChain{{\underline{\Chain}}}
\def\omu{\O}\def\pnu{\P}

\def\cases#1#2#3#4{
                  \left\{
                         \begin{array}{ll}
                           #1,\ &\mbox{#2}
                           \\
                           #3,\ &\mbox{#4}
                          \end{array}
                   \right.
}

\newcommand{\Square}[8]{
\setlength{\unitlength}{.8cm}
\begin{picture}(5,3.6)
\thicklines

\put(0,3){\makebox(0,0){$#1$}}
\put(5,3){\makebox(0,0){$#2$}}
\put(0,0){\makebox(0,0){$#3$}}
\put(5,0){\makebox(0,0){$#4$}}

\put(-.5,1.5){\makebox(0,0)[r]{$#6$}}
\put(5.5,1.5){\makebox(0,0)[l]{$#7$}}
\put(2.5,0.5){\makebox(0,0)[c]{$#8$}}
\put(2.5,3.5){\makebox(0,0)[c]{$#5$}}

\put(1,0){\vector(1,0){3}}
\put(1,3){\vector(1,0){3}}
\put(0,2.5){\vector(0,-1){2}}
\put(5,2.5){\vector(0,-1){2}}
\end{picture}
}

\newcommand{\SQuare}[8]{
\setlength{\unitlength}{.8cm}
\begin{picture}(5,3.6)
\thicklines

\put(0,3){\makebox(0,0){$#1$}}
\put(5,3){\makebox(0,0){$#2$}}
\put(0,0){\makebox(0,0){$#3$}}
\put(5,0){\makebox(0,0){$#4$}}

\put(-.5,1.5){\makebox(0,0)[r]{$#6$}}
\put(5.5,1.5){\makebox(0,0)[l]{$#7$}}
\put(2.5,0.5){\makebox(0,0)[c]{$#8$}}
\put(2.5,3.5){\makebox(0,0)[c]{$#5$}}

\put(1,0){\vector(1,0){3}}
\put(1,3){\vector(1,0){3}}
\put(0,2.5){\vector(0,-1){2}}
\put(5,2.5){\vector(0,-1){2}}
\end{picture}
}

\newcommand{\SquarE}[8]{
\setlength{\unitlength}{.8cm}
\begin{picture}(5,2.2)(-2.4,1.5)
\thicklines

\put(-3,3){\makebox(0,0){$#1$}}
\put(6,3){\makebox(0,0){$#2$}}
\put(-3,0){\makebox(0,0){$#3$}}
\put(6,0){\makebox(0,0){$#4$}}

\put(-3.5,1.5){\makebox(0,0)[r]{$#6$}}
\put(6.5,1.5){\makebox(0,0)[l]{$#7$}}
\put(2.5,0.5){\makebox(0,0)[c]{$#8$}}
\put(2.8,3.5){\makebox(0,0)[c]{$#5$}}

\put(1.25,0){\vector(1,0){3}}
\put(1.25,3){\vector(1,0){3}}
\put(-3,2.5){\vector(0,-1){2}}
\put(6,2.5){\vector(0,-1){2}}
\end{picture}
}

\newcommand{\SquaRE}[8]{
\setlength{\unitlength}{.8cm}
\begin{picture}(5,2.4)(-4,1.3)
\thicklines

\put(-1.5,3){\makebox(0,0){$#1$}}
\put(5,3){\makebox(0,0){$#2$}}
\put(-1.5,0){\makebox(0,0){$#3$}}
\put(5,0){\makebox(0,0){$#4$}}

\put(-2,1.5){\makebox(0,0)[r]{$#6$}}
\put(5.5,1.5){\makebox(0,0)[l]{$#7$}}
\put(2.5,0.5){\makebox(0,0)[c]{$#8$}}
\put(2.5,3.5){\makebox(0,0)[c]{$#5$}}

\put(1.25,0){\vector(1,0){3}}
\put(1.25,3){\vector(1,0){3}}
\put(-1.5,2.5){\vector(0,-1){2}}
\put(5,2.5){\vector(0,-1){2}}
\end{picture}
}

\begin{abstract}
We prove that the operad $\Big$ of natural operations on
the Hochschild cohomology has the homotopy type of the operad of
singular chains on the little disks operad. To achieve this goal, 
we introduce crossed interval groups and show 
that $\Big$ is a certain crossed interval extension of an operad $\Tam$
whose homotopy type is known. This completes the investigation of the
algebraic structure on the Hochschild cochain complex that has lasted
for several decades.
\end{abstract}

%\linenumbers

\maketitle

\begin{center}
{\it Dedicated to the memory of Jean-Louis Loday (12.1.~1946 -- 6.6.~2012)}
\end{center}

\tableofcontents

\section*{Introduction}

It is well-known that, for any `reasonable' type of algebras (where
reasonable means algebras over a quadratic Koszul operad,
see~\cite[II.3.3]{markl-shnider-stasheff:book} or the original
source~\cite{ginzburg-kapranov:DMJ94} for the terminology), 
there exists the associated
cohomology based on a `standard construction.' For example, for
associative algebras, the associated cohomology is the Hochschild
cohomology, for Lie algebras the Chevalley-Eilenberg cohomology, for
associative commutative algebras the Harrison cohomology,~\&c.

There arises a fundamental problem of describing all operations acting
on this associated cohomology, by which one usually means the
understanding of the operad of all 
natural operations
on the standard construction~\cite{markl:de}. 

In this paper we study the operad $\Big$ of all natural operations on
the Hochschild cohomology of associative algebras.  The operad $\Big$
is the totalization of a certain coloured operad $B$ with the set of
colours the natural numbers $\bbN$. We recall the intuitive definition
of $B$ from~\cite[Section~7]{markl:de} in terms of `elementary' natural
operations, and formulate also a categorical, `coordinate-free'
definition which gives a precise meaning to the naturality of
operations.  This answers a remark of Kontsevich and
Soibelman~\cite[page~26]{kontsevich-soibelman} concerning the
nonfunctoriality of the Hochschild complex on an earlier attempt by
McClure and Smith~\cite[page~2]{mcclure-smith} to define natural
operations.  We then prove, in {\bf Theorem~A}, that the two
definitions are equivalent.

In connection with the Deligne conjecture~\cite{deligne:letter},
various suboperads of $\Big$ have been studied and several results
about their homotopy types were
proved~\cite{berger-fresse,kaufmann:Top07,kontsevich-soibelman,mcclure-smith,mcclure-smith2,tamarkin-tsygan:LMP01}.\footnote{Since
this introduction was not intended as an account on the history of the
Deligne conjecture, the list is necessarily incomplete and we
apologize to everyone whose work we did not mention here.}  There is
for instance a~differential graded suboperad $\Tam$ of the operad $\Big$ whose
topological version was introduced in~\cite{tamarkin-tsygan:LMP01}.
The homotopy type of $\Tam$ was established in~\cite{BataninBerger} as
that of the operad of singular chains on the little discs operad.  The
homotopy type of a topological version of $\Tam$ was studied earlier
in~\cite{mcclure-smith2}.

The starting point of this work therefore concerned the inclusion
$\Tam \hookrightarrow \Big$. It turns out that the operad $\Big$ is,
in a certain sense, freely generated by $\Tam$. More precisely, $\Big$
is the free $\I S$-module generated by the $\I$-module $\Tam$, $\Big =
F_\S(\Tam)$, see Proposition~\ref{JarkA}.  Here $\I S$ is an analog of
the symmetric category $\Delta S$ introduced
in~\cite{fiedorowicz-loday:TAMS91} (but see
also~\cite{krasauskas:LMS87}), with the simplicial category $\Delta$
replaced by Joyal's category $\I$ of
intervals~\cite{joyal:disks}. Theorem~\ref{t2} says that the functor
$F_\S(-)$ preserves the homotopy type. Combining this feature with the
observations in the previous paragraph we obtain {\bf Theorem~B},
claiming that the operad $\Big$ has the homotopy type of the operad of
singular chains on the little disks operad $\Dis$ with the inverted
grading.  To our best knowledge, this is the first complete
description of the homotopy type of the operad $\Big$, compare also
the introduction to~\cite{markl:de}.

Besides the suboperad $\Tam$ of $\Big$ mentioned above, one has the
important McClure-Smith operad of braces $\Br$ and the non-unital
(resp.~normalized) versions $\BN$, $\TN$, $\NBr$ (resp.~ $\NormT$,
$\NormB$, $\Norm(\Br)$) of these operads.  In {\bf Theorem~C} we
relate these operads and describe their homotopy~types.

\noindent 
{\bf Conventions.\/} 
Algebraic objects in this paper live in the category of abelian
groups, i.e.~are modules over the ring $\bbZ$ of integers. Therefore,
for instance, $\otimes$ denotes the tensor product over~$\bbZ$. All
operads in this paper are {\em symmetric\/}, i.e.~with right actions
of the symmetric groups on their components. The grading (homological
on cohomological) will sometimes be indicated by $*$  in the sub- or
superscript, the simplicial and cosimplicial degrees by $\bullet$.
The paper makes ample use of the material of \cite{BataninBerger} and
\cite{bbm}.

\noindent
{\bf Acknowledgment.}  
Both authors of the paper were deeply shocked
by the tragic death of Jean-Louis Loday in June 2012. We would like to
dedicate this paper to the memory of our friend whose mathematical
talent and brilliant personality will always be remembered and
admired.  This paper would never be published without his kind
encouragement and support.  We would also like to express our thanks
to him and Zig Fiedorowicz for their readiness to share their
expertise and ideas on crossed simplicial groups with us.

Also the remarks of the referee and
editors lead to substantial improvement of the paper.

\section{Main results}
\label{Jarce_umrela_maminka.}

\def\S{\EuScript S}  \def\id{{\it id}}
\def\Ab{\EuScript Ab} \def\bbZ{{\mathbb Z}} \def\bbN{{\mathbb N}}
\def\frakM{{\mathfrak M}} \def\A{\EuScript A}
\def\hatF{{\widetilde F}} \def\sgn{{\rm sgn}}
\def\hatH{{\widetilde H}} \def\Tot{{\rm Tot\/}}
\def\hatB{{\widetilde B}}
\def\Deltaop{{\Delta^{{\it op}}}}
\def\Coch{\EuScript Coch}\def\CochChain{\EuScript {C}och\EuScript{C}hain}
\def\Ch{\EuScript Ch} \def\uCh{\underline{\EuScript Ch}}
\def\Chain{\EuScript Chain}
\def\C{\EuScript C} \def\Im{{\it Im\/}} \def\Ker{{\it Ker\/}}
\def\Set{\EuScript Set}
\def\bsigma{{\overline{\sigma}}}
\def\Rada#1#2#3{{#1_{#2},\ldots,#1_{#3}}}
\def\unit{{1 \!\! 1}} \def\whD{{\widehat D}}

In this section, which can be read independently on the rest of the
paper, we formulate our main results, Theorems~A, B and~C. Although
Theorem~B follows from Theorem~C, we decided to
state this important result separately, since Theorem~C requires more
background material and we did not want to stretch the reader's
patience more than necessary.

We open this section by defining the dg-operad $\Big = \{\Big(n)\}_{n \geq
0}$ ($\Big$ abbreviating the `big') of {\em all natural multilinear
operations\/} on the Hochschild cochain complex $\CH* = \Lin(\otexp
A*,A)$ of an associative algebra $A$ with coefficients in itself and
describe its homotopy type. In the second half we discuss various
suboperads and variants of $\Big$ including the famous operad of braces. 
We finally formulate a theorem relating these operads
and describing their homotopy types.

The dg-operad $\Big$ is the totalization of a certain $\bbN$-coloured operad
$B$ whose component $B^l_{k_1,\ldots,k_n}$ consists of natural
operations of type $(l;\Rada kln)$, $l,\Rada kln \geq 0$. 
There are several equivalent ways to say what a `natural operation'
is.  There are some `elementary' operations on the Hochschild complex,
see Definition~\ref{malo_casu}, which are meaningful for any
associative unital algebra.  One can define a coloured operad
generated by compositions of linear combinations of these operations,
see Proposition~\ref{definition-of-B}. We call this definition
{\em intuitive.\/}

Another, {\em `coordinate-free'\/} Definition~\ref{Bruslicky}, 
requires that suitably defined natural
operations must act on the endomorphisms operads of monoids $M$ in an
arbitrary additive symmetric monoidal category $V$. The Hochschild
cochain complex $\CH*$ is clearly a special case of such an
endomorphism operad with $M = A$ and $V = \Ab$, the category of
abelian groups. The naturality of operations in the second
approach has a standard categorical content and does not require
introducing any kind of `elementary' operations. By Theorem~A on
page~\pageref{sec:main-results} both definitions give isomorphic results.

We start with the intuitive definition of the operad $\B$.
Let $A$ be a unital associative algebra. 
A {\em natural operation\/} in the sense of~\cite{markl:de} is a linear
combination of compositions of the following `elementary' operations:

(a)~The insertion $\circ_i : \CH {k} \otimes \CH {l} \to \CH
{k+ l- 1}$ given, for $k,l \geq 0$ and $1 \leq i \leq k$, by the formula
\[
\circ_i(f,g)(\Rada a1{k+l-1}) :=   f(\Rada a0{i-1},
g(\Rada ai{i+l-1}),\Rada a{i+l}{k+l-1}). 
\]

(b)~Let $\mu : A \ot A \to A$ be the associative product, $\id :A \to
A$ the identity and $1 \in
A$ the unit. Then elementary operations are also
the `constants' $\mu \in \CH 2$, $\id \in \CH1$ and  $1 \in
\CH 0$.

(c)~The assignment $f \mapsto f\sigma$ permuting the inputs of a
cochain $f \in \CH k$ according to a permutation $\sigma \in S_k$ is an
elementary operation.

\begin{definition}
\label{malo_casu}
Let $\B(A)^l_{\Rada k1n}$ denote, for $l, \Rada kqn \geq 0$, the abelian
group of all natural, in the above sense, operations
\[
O : \CH{k_1} \ot \cdots \ot \CH{k_n} \to \CH l.
\]
The spaces $\B(A)^l_{\Rada k1n}$ clearly form an $\bbN$-coloured suboperad
$\B(A)$ of the endomorphism operad of the $\bbN$-coloured
collection $\{\CH n\}_{n \geq 0}$.
\end{definition}

It follows from definition that elements of $\B(A)^l_{\Rada k1n}$ can
be represented by linear combinations of $(l;\Rada k1n)$-trees in the
sense of Definition~\ref{d2b} below in which, as usual, the {\em
arity\/} of a vertex of a rooted tree is the number of its input
edges and the {\em legs\/} are the input edges of a tree,
see~\cite[II.1.5]{markl-shnider-stasheff:book} for the terminology.

\begin{definition}
\label{d2b}
Let $l,\Rada k1n$ be non-negative integers.  An $(l;\Rada k1n)$-tree
is a planar tree with legs labeled by $\rada 1l$ and three types of
vertices:
\begin{itemize}
\item[(a)] 
`white' vertices of arities $\rada {k_1}{k_n}$ labeled by 
$\rada 1n$, 
\item[(b)] 
`black' vertices of arities $\geq 2$ and
\item[(c)] 
`special' vertices of arity $0$ (no input edges).
\end{itemize}
We moreover require that there are no edges connecting two
black vertices or a black vertex with a special vertex. For $n=0$ we
allow also the exceptional trees \exeptional\ and \stub\ with no
internal vertices.  
\end{definition}

We call an internal edge whose initial vertex is special a {\em stub\/} (also
called, in~\cite{kontsevich-soibelman}, a {\em tail\/}). It follows
from definition that the terminal vertex of a stub is white; the
exceptional tree \stub\ is not a stub. An
example of an $(l;\Rada k1n)$-tree is given in Figure~\ref{fig2}.
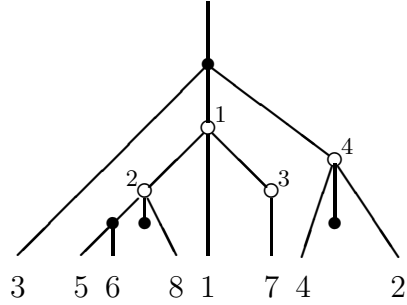
\begin{figure}
{% Picture saved by xtexcad 2.4
\unitlength=1.2pt
\thicklines
\begin{picture}(120.00,90.00)(0.00,0.00)
\put(50.00,0.00){\makebox(0.00,0.00){$8$}}
\put(80.00,0.00){\makebox(0.00,0.00){$7$}}
\put(20.00,0.00){\makebox(0.00,0.00){$5$}}
\put(30.00,0.00){\makebox(0.00,0.00){$6$}}
\put(90.00,0.00){\makebox(0.00,0.00){$4$}}
\put(0.00,0.00){\makebox(0.00,0.00){$3$}}
\put(120.00,0.00){\makebox(0.00,0.00){$2$}}
\put(60.00,0.00){\makebox(0.00,0.00){$1$}}
\put(100.00,20.00){\makebox(0.00,0.00){$\bullet$}}
\put(40.00,20.00){\makebox(0.00,0.00){$\bullet$}}
\put(60.00,70.00){\makebox(0.00,0.00){$\bullet$}}
\put(30.00,20.00){\makebox(0.00,0.00){$\bullet$}}
\put(100.00,40.00){\makebox(0.00,0.00){\large$\circ$}}
\put(102.00,42.00){\makebox(0.00,0.00)[lb]{\scriptsize $4$}}
\put(80.00,30.00){\makebox(0.00,0.00){\large$\circ$}}
\put(82.00,32.00){\makebox(0.00,0.00)[lb]{\scriptsize $3$}}
\put(40.00,30.00){\makebox(0.00,0.00){\large$\circ$}}
\put(38.00,32.00){\makebox(0.00,0.00)[rb]{\scriptsize $2$}}
\put(60.00,50.00){\makebox(0.00,0.00){\large$\circ$}}
\put(62.00,52.00){\makebox(0.00,0.00)[lb]{\scriptsize $1$}}
\put(30.00,20.00){\line(0,-1){10.00}}
\put(101.00,38.00){\line(2,-3){19.00}}
\put(100.00,39.00){\line(0,-1){19.00}}
\put(99.00,38.00){\line(-1,-3){9.50}}
\put(60.00,70.00){\line(4,-3){38.00}}
\put(80.00,28.00){\line(0,-1){18.00}}
\put(61.50,48.50){\line(1,-1){17.00}}
\put(60.00,48.00){\line(0,-1){38.00}}
\put(41.00,28.00){\line(1,-2){9.00}}
\put(40.00,28.00){\line(0,-1){8.00}}
\put(38.00,28.00){\line(-1,-1){18.00}}
\put(58.50,48.50){\line(-1,-1){17.00}}
\put(60.00,52.00){\line(0,1){18.00}}
\put(60.00,70.00){\line(-1,-1){60.00}}
\put(60.00,90.00){\line(0,-1){20.00}}
\end{picture}}
\caption{\label{fig2}
An $(8;3,3,1,3)$-tree representing an operation
in $B^8_{3,3,1,3}(A)$. It has 4 white vertices, 2 black
vertices and 2 stubs. We use the convention that directed edges
point upwards.}
\end{figure}

An $(l;\Rada k1n)$-tree $T$ determines 
the natural operation $O_T \in \B(A)^l_{\Rada k1n}$ whose action on $\CH{*}$ is
given by decorating, for each $1 \leq i \leq n$, the $i$th white
vertex by $f_i \in \CH {k_i}$, the black vertices by the iterated
multiplications, the special vertices by the unit $1$, and
performing the composition along the tree.  For instance, the tree in
Figure~\ref{fig2} represents the operation
\[
O_T(f_1,f_2,f_3,f_4)(\Rada a18) :=
a_3 f_1(f_2(a_5a_6,1,a_8),a_1,f_3(a_7))f_4(a_4,1,a_2) 
\]
where, as usual, we omit the symbol for the iteration of the 
associative multiplication $\mu$.  The exceptional $(1;)$-tree \exeptional\
represents the identity $\id \in \CH1$ and the $(0;)$-tree \stub\ the
unit $1 \in \CH0$.

\begin{notation}
For each $l,\Rada k1n \geq 0$ denote by $B^l_{\Rada k1n}$ the free
abelian group spanned by all $(l;\Rada k1n)$-trees. The
correspondence $T \mapsto O_T$ defines,  for each associative algebra 
$A$, a linear epimorphism $\omega_A : B^l_{\Rada k1n} \epi B(A)^l_{\Rada k1n}$.
\end{notation}

Let $T'$ be an $(l';\Rada {k'}1n)$-tree, $T''$ an $(l'';\Rada
{k''}1m)$-tree and assume that $l''= k'_i$ for some $1 \leq i \leq n$.
The {\em $i$th vertex insertion\/} assigns to
$T'$ and $T''$ the tree $T' \circ_i T''$ obtained by replacing the
white vertex of $T'$ labelled $i$ by $T''$. It may happen that
this replacement creates edges connecting black vertices. In that
case it is followed by collapsing these edges. The above construction
extends into a linear operation
\begin{equation}
\label{eq:1}
\circ_i : B^{l'}_{\Rada {k'}1n} \otimes B^{l''}_{\Rada {k''}1m} 
\to B^{l'}_{\Rada {k'}1{i-1},\Rada {k''}1m,\Rada {k'}{i+1}n},\
1 \leq i \leq n,\ l''= k'_i.
\end{equation}

\begin{proposition}
\label{ani_mi_neodpovedela}\label{definition-of-B}
The spaces $B^l_{\Rada k1n}$ assemble into an 
$\bbN$-coloured operad $B$ with the operadic composition given by the vertex
insertion and the symmetric group relabeling the white vertices. 
With this operad structure, the maps $\omega_A : B^l_{\Rada k1n} \epi
B(A)^l_{\Rada k1n}$  form an epimorphism $\omega_A : B \epi
B(A)$ of $\bbN$-coloured operads.
\end{proposition}

\begin{proof}
A direct verification.
\end{proof}

\begin{definition}
A unital associative algebra $A$ is {\em generic\/} if the
map $\omega_A : B \epi B(A)$ is an isomorphism.
\end{definition}

Theorem~\ref{snad_mne_neopusti} proved in 
Appendix~\ref{Jarunka_jeste_spinka} states that free
associative algebras generated by countably many generators are generic,
so generic algebras exist.  The operad $B$ can thus be
equivalently defined as the operad of elementary operations acting on
the Hochschild complex of a~generic algebra.  This point of view will
be used in our description of the isomorphism between the underlying
category of the coloured operad $B$ and the linearization of the
category $(\I S)^{\op}$ given in Section~\ref{JARka}.

Let us proceed to the `coordinate-free' definition of the `big'
operad.  Denote by $\SYMCAT$ the category whose objects are pairs
$(V,M)$ of an additive strict symmetric monoidal category $V$ and a
unital monoid $M$ in $V$. Morphisms are additive strict symmetric
monoidal functors preserving distinguished monoids.\footnote{Using
other types of symmetric monoidal categories and symmetric monoidal
functors leads, however, to the same result.}  It is interesting to
note that the initial object of $\SYMCAT$ is the category $(\Delta
S)_{alg}$ recalled on page~\pageref{moz}, with $1$ as the
distinguished monoid.

\begin{definition} 
\label{Bruslicky}
Let $\NAT^l_{k_1,\ldots,k_n}$ be, for $l,\Rada k1n \geq 0$, the abelian group
of families of linear maps
\begin{equation}
\label{Kopou_v_baraku.} 
\alpha_{(V,M)} : \underline{V}(M^{\ot k_1},M)\otimes \cdots \otimes
\underline{V}(M^{\ot k_n},M)\rightarrow \underline{V}(M^{\ot l},M)
\end{equation} 
indexed by objects $(V,M)\in \SYMCAT$ which are natural in the sense
that, for any morphism $F:(V,M)\rightarrow (W,N)$, the diagram
\begin{equation}
\label{Za_chvili_na_bruslicky.}
\SquarE
{\underline{V}(M^{\ot k_1},M)\otimes \cdots \otimes
  \underline{V}(M^{\ot k_n},M)}{\underline{V}(M^{\ot l},M)}
{\underline{W}(N^{\ot k_1},N)\otimes \cdots \otimes
  \underline{W}(N^{\ot k_n},N)}{\underline{W}(N^{\ot l},N)}
{\alpha_{(V,M)}}{F\ot \cdots \ot F}{F}{\alpha_{(W,N)}}
\raisebox{-4em}{\rule{0pt}{0pt}}
\end{equation}
commutes. The spaces $\NAT^l_{k_1,\ldots,k_n}$ clearly assemble into a
coloured operad $\NAT$ in the category $\Ab$ of abelian groups, with
natural numbers $\bbN = \{0,1,2,\ldots\}$ as its set of colors.
\end{definition}  

The following first main result of our paper is proved in
Section~\ref{Zavalen_krabicemi}.

\begin{theoremA}
\label{sec:main-results}
The $\bbN$-coloured operads $\NAT$ and $\B$ are canonically isomorphic.
\end{theoremA}

\begin{remark}
The isomorphism $\NAT \cong \B$ 
is a particular feature of the (multi)linearity implied 
by the $\Ab$-enrichment used in this paper.
The operad $\NAT$ is, for general enrichments, 
bigger than $\B$. For instance,
in the Cartesian situation it contains also the projections
\[
\pi^i_{(V,M)} : \underline{V}(M^{\ot k_1},M)\times \cdots \times
\underline{V}(M^{\ot k_n},M)\rightarrow \underline{V}(M^{\ot k_i},M),
\ 1 \leq i \leq n.
\]
\end{remark}

The operads $\B$ and $\NAT$ have obvious nonsymmetric analogs. More
precisely, let $\check{T}$ be the $\bbN$-coloured operad obtained by
taking in Definition~\ref{Bruslicky} instead of $\Symcat$ the category
of pairs $(V,M)$ consisting of a {\em nonsymmetric\/} monoidal
category $V$ and a unital monoid $M$ in~$V$. Likewise, let ${T}$ be
the result of removing (c) from the list of elementary operations used
in Definition~\ref{malo_casu}.\footnote{This operad was first
described in~\cite[Section~3]{tamarkin-tsygan:LMP01}. Still another
presentation of $T$ can be found in~\cite[Proposition~4.10]{bbm} or in
the proof of \cite[Proposition~2.14]{BataninBerger}.}  The following analog
of Theorem~A holds.

\begin{theorem}
\label{Za_ctyri_dny_uvidim_Jarku.}
The $\bbN$-coloured operads $T$ and $\check{T}$ are canonically isomorphic.
\end{theorem}

The operad $B$ has, as each $\bbN$-colored operad in $\Ab$, its {\em
underlying category\/} $\calU(B)$ enriched in~$\Ab$, whose objects are
natural numbers and the enriched Hom-sets are $\calU(B)(k,l) :=
B^l_k$.  The $\circ_i$-operation of~(\ref{eq:1}) taken with $i=
n=m=1$, $l' = l$, $l'' = k'_1 = k$ and $k''_1= m$, i.e.~a map $B^l_k
\ot B^k_m \to B^l_m$, gives the categorial composition.

Recall that the simplicial category $\Delta$ 
has objects the finite ordinals $[n] = \{\rada
0n\}$, $n \geq 0$.  Morphism of $\Delta$ are generated by the face
maps $\delta_i : [n-1] \to [n]$ (misses $i$) and the degeneracy maps
$\sigma_i : [n+1] \to [n]$ (hits $i$ twice), $i = \rada0n$.
There is a natural functor $u: \Delta \to  \calU(B)$ such that $u([n])
:= n$, $u(\delta_i) \in B^n_{n-1}$
is defined as 
\[
\raisebox{-1.5em}{}
u(\delta_0) := \lwhite, \hskip 1em  u(\delta_i) := \cwhite, \
1 \leq i \leq n-1,\ \mbox { and }\ 
u(\delta_{n}) := \hskip .9em \rwhite,
\]
and $u(\sigma_i) \in B^n_{n+1}$ as
\[
\raisebox{-1.5em}{}
u(\sigma_i) := \cwhitestub, \ 0 \leq i \leq n.
\]
In the displays, $n-1$, resp.~$n+1$, denotes the arity of the white
vertex and $i$ indicates its $i$th input. The above trees obviously act on
the Hochschild complex of an associative algebra via the standard
cosimplicial structure.

It follows from general properties of coloured operads that $B^l_k$
acts on $B^l_{\Rada k1n}$ covariantly on the upper index and
contravariantly on each lower index.  Therefore, the spaces
$B^l_{\Rada k1n}$ assemble into a functor
\begin{equation}
\label{eq:2}
B^{\bullet}_{\bullet_1,\ldots,\bullet_n} :\Delta\times
(\Delta^{\op})^n\to \Ab.
\end{equation}

We can totalize the functor $B^{\bullet}_{\bullet_1,\ldots,\bullet_n}$
(i.e.~we apply the cosimplicial $\uTot$ with respect to the upper index and
multisimplicial $\lTot$ with respect to the lower
indices).\footnote{The cosimplicial totalization is recalled in
  Section~\ref{jarKA}, the (multi)simplicial totalization is the standard one.}
That is, for  any $n \geq 0$ we  put\label{zacatek}
\[
\Big^*(n) := \uTot(\lTot B^{\bullet}_{\bullet_1,\ldots,\bullet_n})
= \prod_{l - (k_1 + \cdots + k_n) = *} {B^l_{\Rada k1n}}. 
\]
The degree $+1$ differential $d : \Big^*(n) \to \Big^{*+1}(n)$ is thus given
by $d : = \delta - (\pa_1 + \cdots + \pa_n)$, where $\delta$
(resp.~each $\pa_i$, $1 \leq i \leq n$) is induced from the
corresponding cosimplicial (resp.~simplicial) structure. 

It is not hard to observe (see the appendix of \cite{bbm} for a formal
categorical proof) that the collection $\Big = \{\Big ^*(n)\}_{n \geq
0}$ is a dg operad which acts on the cosimplicial totalization $\CH *
= \uTot(\CH{\bullet})$ of the Hochschild cochains with the standard
structure of the cosimplicial abelian group. We remind the reader
that, according to our conventions, $\CH *$ denotes the Hochschild
cochain complex while $\CH{\bullet}$  the
cosimplicial abelian group of the Hochschild cochains.

The structure of the operad $\Big$ is visualized in
Figure~\ref{jarka1}. The degree $m$-piece of $\Big(n)$ is the
direct {\em product\/}, not the direct sum, of elements on the
diagonal $p+q=m$. Therefore the usual spectral sequence arguments do
not apply. For instance, it can be shown that all rows in
Figure~\ref{jarka1} are acyclic, but $\Big(n)$ is not acyclic! We have

\begin{figure}
{% Picture saved by xtexcad 2.4
\unitlength=1.2pt
\begin{picture}(200.00,173.00)(0.00,40.00)
\put(150.00,52){\makebox(0.00,0.00){$\vdots$}}
\put(100.00,52){\makebox(0.00,0.00){$\vdots$}}
\put(50.00,52){\makebox(0.00,0.00){$\vdots$}}
\put(200.00,80.00){\makebox(0.00,0.00){$\cdots$}}
\put(200.00,110.00){\makebox(0.00,0.00){$\cdots$}}
\put(200.00,140.00){\makebox(0.00,0.00){$\cdots$}}
\put(200.00,170.00){\makebox(0.00,0.00){$\cdots$}}
\put(200.00,200.00){\makebox(0.00,0.00){$\cdots$}}
\put(150.00,120.00){\vector(0,1){10.00}} %To je chybejici vektor
\put(100.00,90.00){\vector(0,1){10.00}}
\put(50.00,90.00){\vector(0,1){10.00}}
\put(150.00,90.00){\vector(0,1){10.00}}
\put(100.00,60.00){\vector(0,1){10.00}}
\put(50.00,60.00){\vector(0,1){10.00}}
\put(150.00,60.00){\vector(0,1){10.00}}
\put(100.00,120.00){\vector(0,1){10.00}}
\put(50.00,120.00){\vector(0,1){10.00}}
\put(150.00,150.00){\vector(0,1){10.00}}
\put(100.00,150.00){\vector(0,1){10.00}}
\put(50.00,150.00){\vector(0,1){10.00}}
\put(150.00,180.00){\vector(0,1){10.00}}
\put(100.00,180.00){\vector(0,1){10.00}}
\put(50.00,180.00){\vector(0,1){10.00}}
\put(165.00,140.00){\vector(1,0){25.00}}
\put(115.00,140.00){\vector(1,0){20.00}}
\put(65.00,140.00){\vector(1,0){20.00}}
\put(10.00,140.00){\vector(1,0){25.00}}
\put(165.00,110.00){\vector(1,0){25.00}}
\put(115.00,110.00){\vector(1,0){20.00}}
\put(65.00,110.00){\vector(1,0){20.00}}
\put(10.00,110.00){\vector(1,0){25.00}}
\put(165.00,80.00){\vector(1,0){25.00}}
\put(115.00,80.00){\vector(1,0){20.00}}
\put(65.00,80.00){\vector(1,0){20.00}}
\put(10.00,80.00){\vector(1,0){25.00}}
\put(10.00,170.00){\vector(1,0){25.00}}
\put(165.00,170.00){\vector(1,0){25.00}}
\put(115.00,170.00){\vector(1,0){20.00}}
\put(65.00,170.00){\vector(1,0){20.00}}
\put(160.00,200.00){\vector(1,0){30.00}}
\put(110.00,200.00){\vector(1,0){30.00}}
\put(60.00,200.00){\vector(1,0){30.00}}
\put(150.00,80.00){\makebox(0.00,0.00){$B^2_3(n)$}}
\put(150.00,110.00){\makebox(0.00,0.00){$B^2_2(n)$}}
\put(150.00,140.00){\makebox(0.00,0.00){$B^2_1(n)$}}
\put(150.00,170.00){\makebox(0.00,0.00){$B^2_0(n)$}}
\put(100.00,80.00){\makebox(0.00,0.00){$B^1_3(n)$}}
\put(100.00,110.00){\makebox(0.00,0.00){$B^1_2(n)$}}
\put(100.00,140.00){\makebox(0.00,0.00){$B^1_1(n)$}}
\put(100.00,170.00){\makebox(0.00,0.00){$B^1_0(n)$}}
\put(50.00,80.00){\makebox(0.00,0.00){$B^0_3(n)$}}
\put(50.00,110.00){\makebox(0.00,0.00){$B^0_2(n)$}}
\put(50.00,140.00){\makebox(0.00,0.00){$B^0_1(n)$}}
\put(50.00,170.00){\makebox(0.00,0.00){$B^0_0(n)$}}
\put(0.00,80.00){\makebox(0.00,0.00){$0$}}
\put(0.00,110.00){\makebox(0.00,0.00){$0$}}
\put(0.00,140.00){\makebox(0.00,0.00){$0$}}
\put(0.00,170.00){\makebox(0.00,0.00){$0$}}
\put(150.00,200.00){\makebox(0.00,0.00){$0$}}
\put(100.00,200.00){\makebox(0.00,0.00){$0$}}
\put(50.00,200.00){\makebox(0.00,0.00){$0$}}
\end{picture}}
\caption{% 
The structure of the big operad $\Big$. In the above
diagram, $B^m_k(n) := \prod_{k_1+\cdots +k_n =k}
B^m_{k_1,\ldots,k_n}$. The vertical arrows are the simplicial
differentials $\pa$ and the horizontal arrows are the cosimplicial
differentials~$\delta$.\label{jarka1}}
\end{figure}

\begin{theoremB}
The big dg-operad $\Big$ of all natural operations on the Hochschild
complex has the homotopy type of the dg-operad $\SC_{-*}(\Dis)$ of
singular chains on the little disks operad $\Dis$ with the inverted
grading.
\end{theoremB}

Theorem~B is a consequence of Theorem~C formulated at the end of this section. 

\begin{variants}
\label{Jarunka}
An important suboperad of $\Big$ is the suboperad $\BN$ spanned
by trees {\em without\/} stubs and without the exceptional tree
\stub. The operad $\BN$ is the operad of
all natural multilinear operations on the Hochschild complex of a~{\em
non-unital\/} associative algebra. It is generated by natural
operations (a)--(c) above but without the unit $1 \in \CH 0$ in (b).

As we already know, the arity $n$-component $\Big(n)$ of the operad
$\Big$ is, for each $n \geq 0$, an $n$-times simplicial abelian
group. One can therefore consider its normalization 
$\Norm(\Big)(n)$, i.e.~the quotient of $\Big(n)$ by images of the
simplicial degeneracies. Since the simplicial degeneracies of
the $i$th simplicial structure, $1 \leq i \leq n$, 
act by growing stubs from the white vertex labelled $i$, we see that,
as dg-abelian groups,
\[
\Norm(\Big)(n) \cong \cases{\BN(n)}{for $n \geq 1$, and}{\Big(0)}{for $n=0$.}
\]
The space $\BN(0)$ is a proper subset of
$\NormB(0)$ as the latter contains also
the span of the `exceptional tree' \stub.
It is not difficult to see that the componentwise normalizations
$\{\NormB(n)\}_{n \geq 0}$ assemble into an operad $\NormB$. One may
also identify $\NormB$ with the subcollection of $\Big$ spanned
by trees without stubs, but keeping in mind that
$\NormB$ is not a suboperad of $\Big$.  

The operad $\BN$ is the same as the operad introduced, under the name
$\Big_{\sAss}$, in~\cite{markl:de}. In the notation of that paper,
$\Big$ would be denoted by $\Big_{\sUAss}$. It was shown
by direct calculations, in~\cite[Example~12]{markl:de}, that
$\Big_{\sAss}(0) \cong \BN(0)$ are acyclic dg-abelian groups.

While $\BN$ acts on the Hochschild complex of a non-unital
algebra, the operad $\NormB$ acts on the {\em normalized\/} Hochschild
complex of a {\em unital\/} algebra. One has the chain of
operad maps
\begin{equation}
\label{koleno_porad_boli}
\BN \stackrel\iota\hookrightarrow \Big \stackrel\pi\twoheadrightarrow \NormB,
\end{equation}
in which the projection $\pi$ is a weak equivalence and the components
$\pi\iota(n)$ of the composition $\pi\iota$ are isomorphisms for each
$n \geq 1$. If $\mathfrak{U}$ denotes the functor that replaces the arity
zero component of a dg-operad by the trivial abelian group, then
$\mathfrak{U}(\pi\iota)$ is clearly a dg-operad isomorphism $\mathfrak{U}(\BN) \cong
\mathfrak{U}(\NormB)$.
\end{variants}

\noindent 
{\bf The operad $\Tam .$}
Totalizing the operad $T$ of Theorem~\ref{Za_ctyri_dny_uvidim_Jarku.} 
in the same way as $B$ we produce a suboperad $\Tam$ of $\Big$ whose
 arity-$n$ piece equals
\[
\Tam^*(n) :=\uTot(\lTot T^{\bullet}_{\bullet_1,\ldots,\bullet_n})^*
= \prod_{l - (k_1 + \cdots + k_n) = *} {T^l_{\Rada k1n}}, 
\]
where operations in $T^l_{\Rada k1n}$ are represented by linear
combinations of {\em unlabeled\/} $(l;\Rada k1n)$-trees, 
that is, planar trees as
in Definition~\ref{d2b} but without the labels of the legs. The
inclusion $T^l_{\Rada k1n} \hookrightarrow B^l_{\Rada k1n}$ is realized by 
labeling the legs of an unlabeled tree counterclockwise in the
orientation given by the planar embedding. The operad
$\Tam$ is a chain version of the operad considered 
in~\cite[Section~3]{tamarkin-tsygan:LMP01}.\label{konec} 

There are finally the operad $\TN :=
\BN \cap \Tam$ generated by unlabeled trees without stubs and without
\stub, and the normalized quotient $\NormT$ of $\Tam$. As before, one
has the diagram 
\[
TN \stackrel\iota\hookrightarrow \Tam
\stackrel\pi\twoheadrightarrow \NormT
\] 
with the properties analogous to that of~(\ref{koleno_porad_boli}).

\noindent 
{\bf Operads of braces.}
\label{Kveta_prespala_v_Zitne.}
There is another very important suboperad $\Br$ of $\Big$ generated by braces,
cup-products and the unit.
More precisely, $\Br$ is the suboperad of the big operad 
$\Big$ generated by the following operations.

(a) The {\em cup product\/} $\CH * \ot \CH * \to \CH *$ given, 
for $f \in \CH k$ and $g \in \CH l$, by
\[
(f \cup g)(\Rada a1{k+l}):= f(\Rada a1k)g(\Rada a{k+1}{k+l}).
\]

(b) The constant $1 \in \CH 0$.

(c) The {\em braces\/} $-\{\rada --\} : 
\otexp {\CH *}n \to \CH*$, $n \geq 2$, given by
\[
f\{\Rada g2n\} := \sum (-1)^\epsilon f(\rada {\id}{\id},g_2,\rada
{\id}{\id},g_n,\rada {\id}{\id}),
\]
where $\id$ is the identity map of $A$, the summation runs over all
possible substitutions of $\Rada g2n$ (in that order) into $f$ and
\[
\epsilon := \sum_{j=2}^n (\deg(g_i)-1)t_j,
\]
in which $t_j$ is the total number of inputs in front of $g_j$.

The brace operad has also its non-unital version $\NBr := \BN \cap
\Br$ generated by elementary operations (a) and (c). One can verify
that both $\Br$ and $\NBr$ are indeed dg-suboperads of $\Big$,
see~\cite{bbm,mcclure-smith}. 

To complete the picture, we denote by
$\Norm(\Br) \subset \Norm(\Big)$ the image of $\Br$ under the projection
$\Big \epi \NormB$. One has again an analog 
$\NBr \stackrel\iota\hookrightarrow \Br \stackrel\pi\twoheadrightarrow
\Norm(\Br)$ of~(\ref{koleno_porad_boli}).

It turns out that the piece $\Br(n)$ of the operad $\Br$ is, for each
$n\geq 0$, the unnormalized chain complex associated to the $n$-times
simplicial abelian group \hbox{$T(\Rada \bullet 1n;0): (\Delta^{\rm
op})^{\times n} \to \Ab$}, see~\cite[Section~3.1]{bbm} for details. 
The chain complex $\Norm(\Br)(n)$ is then the
simplicial normalization of $T(\Rada \bullet 1n;0)$, 
so the map $\Br \stackrel\pi\twoheadrightarrow \Norm(\Br)$ is
a weak equivalence as in the previous cases.

\vskip .3em
\noindent 
{\bf Remark.} 
The operads $\Norm(\Br)$ and $\NBr$ were first introduced by McClure and Smith~\cite{mcclure-smith} under the names
 $\mathcal H$ and $\mathcal H'$, respectively. 
 
\begin{figure}
\[
{% Picture saved by xtexcad 2.4
\thicklines
\unitlength=.7pt
\begin{picture}(360.00,263.00)(-70.00,0.00)
\put(200.00,100.00){\makebox(0.00,0.00){$\BN$}}
\put(160.00,0.00){\makebox(0.00,0.00){$\TN$}}
\put(0.00,50.00){\makebox(0.00,0.00){$\NBr$}}
\put(160.00,80.00){\makebox(0.00,0.00){$\Tam$}}
\put(200.00,180.00){\makebox(0.00,0.00){$\Big$}}
\put(0.00,130.00){\makebox(0.00,0.00){$\Br$}}
\put(0,83){
\put(160.00,80.00){\makebox(0.00,0.00){$\NormT$}}
\put(200.00,177.00){\makebox(0.00,0.00){$\NormB$}}
\put(0.00,130.00){\makebox(0.00,0.00){$\Norm(\Br)$}}
}
%VODOROVNE
%dole horni levy
\put(170.00,92.00){\vector(4,1){20.00}}
\put(24.00,55.00){\line(4,1){100.00}}
\put(0,83){
\put(170.00,92.00){\vector(4,1){20.00}}
\put(24.00,55.00){\line(4,1){100.00}}
}
%nahore dolni levy
\put(24.00,124.00){\vector(3,-1){120.00}}
\put(0,83){
\put(30.00,122.00){\vector(3,-1){97.00}}
}
%dole dolni levy
\put(24.00,44.00){\vector(3,-1){120.00}}
%nahore horni levy
\put(32.00,223.00){\vector(4,1){135.00}}
%SVISLE
%prvni
\put(0.00,68.00){\vector(0,1){49.00}}
\put(-5.00,63.00){
\qbezier(0,10)(0,0)(2.5,0)\qbezier(2.5,0)(5,0)(5,10)
}
%druhy
\put(160.00,18.00){\vector(0,1){49.00}}
\put(155,13.00){
\qbezier(0,10)(0,0)(2.5,0)\qbezier(2.5,0)(5,0)(5,10)
}
%treti
\put(200.00,122.00){\vector(0,1){45.00}}
\put(195,118.00){
\qbezier(0,10)(0,0)(2.5,0)\qbezier(2.5,0)(5,0)(5,10)
}
\put(0,83){
%prvni
\put(0.00,63.00){\vector(0,1){54.00}}
\put(0.00,63.00){\vector(0,1){44.00}}
%druhy
\put(160.00,13.00){\vector(0,1){54.00}}
\put(160.00,13.00){\vector(0,1){44.00}}
%treti
\put(200.00,113.00){\vector(0,1){54.00}}
\put(200.00,113.00){\vector(0,1){44.00}}
}
%nahore dopredu
\put(165.00,103.00){\vector(1,2){33.00}}
%dole dopredu
\put(166.00,15.00){\vector(1,2){33.00}}
\put(0,83){
%nahore dopredu
\put(161.00,93.00){\vector(1,2){35.00}}
}
\put(-5,170){\makebox(0.00,0.00)[r]{$\pi$}}
\put(206,220){\makebox(0.00,0.00)[l]{$\pi$}}
\put(155,120){\makebox(0.00,0.00)[r]{$\pi$}}

\put(0,-80)
{
\put(-5,170){\makebox(0.00,0.00)[r]{$\iota$}}
\put(206,220){\makebox(0.00,0.00)[l]{$\iota$}}
\put(155,120){\makebox(0.00,0.00)[r]{$\iota$}}
}

\end{picture}}
\]
\caption{Operads of natural operations and their maps. The horizontal
maps are inclusions.\label{fig3}}
\end{figure}
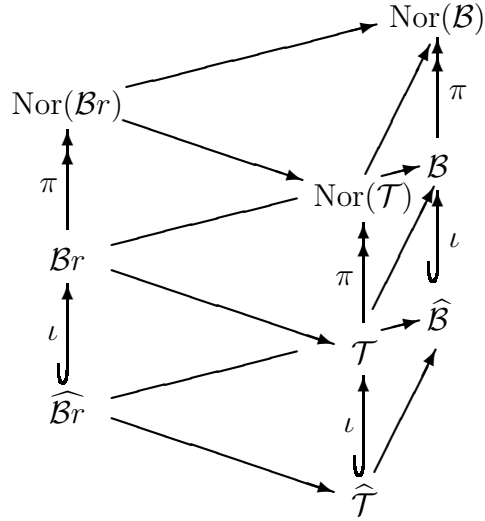

Recall that $\mathfrak{U}$ is the functor that replaces the arity
zero component of a dg-operad by the trivial abelian group.
The last main result of this paper is:

\begin{theoremC} 
The operads introduced above can be organized into the diagram
in Figure~\ref{fig3} in which:
\begin{enumerate}
\item  
Operads in the two upper triangles have the chain homotopy type of the operad
$\SC_{-*}(\Dis)$ of singular chains on the little disks operad $\Dis$
with the inverted grading.

\item  
all  morphisms  between vertices of the two upper triangles 
are weak equivalences,
  
\item operads in the bottom triangle of Figure~\ref{fig3} have the chain
homotopy type of the operad $\SC_{-*}(\Dis)$ with the component of
arity $0$ replaced by the trivial abelian group, and 

\item  all 
morphisms in Figure~\ref{fig3} become weak equivalences after the 
application of the functor~$\mathfrak{U}$. 
\end{enumerate}
\end{theoremC}

\noindent 
Theorem~C is proved in Section~\ref{JARka}.

\begin{center}
{\bf Plan for the proofs.} 
\end{center}

\noindent 
{\bf Section~\ref{Zavalen_krabicemi}.\/} To prove Theorem~A that states
that both the intuitive and categorical definitions of the ${\mathbb
N}$-coloured operad~$B$ lead to isomorphic results, we introduce, in
Definition~\ref{hh}, an auxiliary ${\mathbb N}$-coloured
operad $\Nat$ of natural operations acting
on multiplicative operads with unit. Theorem~A will then follow from
Propositions~\ref{Pisu_v_Koline} and~\ref{Jarka_si_zapomela_ruzicku.}
claiming that both definitions of $B$ give operads isomorphic to
$\Nat$.

\noindent 
{\bf Section~\ref{2_den}.\/} We introduce crossed interval groups and
describe our main example, the crossed interval group $\S$ with the
underlying category $\I S$.  For a crossed
interval group $\H$, we define $\H$-objects in a category $\C$ and
describe, in Lemma~\ref{freeHobject}, the left adjoint $F_{\H}$  to the
restriction functor from the category of $\H$-objects in $\C$ to the
category of cosimplicial objects in $\C$.

\noindent 
{\bf Section~\ref{jarKA}.\/} This section is devoted to the particular
case of $\S$-objects in the category $\Chain$ of chain complexes.  We
use an acyclic-models-type argument developed in
Appendix~\ref{sec:acycl-models-theor} to prove, in Theorem~\ref{t2},
that the functor $F_\S$ preserves the homotopy type.

\noindent 
{\bf Section~\ref{JARka}.\/} We show that the pieces of the coloured
operad $B$ assemble into a functor from $(\I S)^{\op} \times (\I
S)^{\times n}$ to the category of abelian groups.  By
Proposition~\ref{JarkA}, this functor is the result of the application
of the functor $F_{\S}$ to the
pieces of a coloured operad whose homotopy type is known. This, along
with Theorem~\ref{t2} and a couple of standard arguments, implies
Theorem~C.

\noindent
{\bf Appendix~\ref{sec:acycl-models-theor}.\/} We introduce a theory
of acyclic models for functors with values in cochain complexes. The
main result, Theorem~\ref{p2}, is a dual version of the classical
(chain) Acyclic Models Theorem.

\noindent 
{\bf Appendix~\ref{Jarunka_jeste_spinka}.\/} The concept of generic
algebras introduced in Section~\ref{Jarce_umrela_maminka.} allows
to view elements of the operad $B$ as multilinear operations on the Hochschild
complex of a concrete algebra. In this part of the appendix we show
that free associative unital algebras generated by countably many
generators are~generic.

\section{Multiplicative operads and proof of Theorem~A}
\label{Zavalen_krabicemi}

The aim of this section is to prove Theorem~A and necessary auxiliary
results. We start by analyzing the structure of the
operad $\B$, using mostly the material taken from~\cite{bbm}
and~\cite{markl:de}.  We then prove, in
Proposition~\ref{Pisu_v_Koline}, that the operad $\NAT$ is isomorphic
to an `auxiliary' operad $\Nat$ of natural operations acting on
multiplicative operads with unit. Finally, we show in
Proposition~\ref{Jarka_si_zapomela_ruzicku.} that $\B \cong
\Nat$. This will imply Theorem~A.

The identification of $\B^l_{\Rada k1n}$ with the span of the set of 
$(l;\Rada k1n)$-trees offers another description of
the pieces of the operad $\B$. Denote by $\Fr$ the quotient
\begin{equation}
\label{Zitra_je_pohreb_Jerusciny_maminky.}
\Fr := \frac{\Gamma(\Rada o1n,\mu,e)}{(\mu \circ_1 \mu = \mu \circ_2
  \mu,\ \mu \circ_1 e = \mu \circ_2 e = \id)},
\end{equation}
of the free operad $\Gamma(\Rada o1n,\mu,e)$ 
generated by $n$ operations $o_i \in \Fr(k_i)$ of arities $k_i$,
$1 \leq i \leq n$, one operation $\mu$ of arity $2$ and one operation
$e$ of arity $0$, modulo the ideal generated by the
associativity $\mu \circ_1 \mu = \mu \circ_2 \mu$ and unitality $\mu
\circ_1 e = \mu \circ_2 e = \id$ of
$\mu$.

Let $\Fr_{\rada 11}(l)$ be the subspace of
elements of $\Fr(l)$ containing each generator $\Rada o1n$  precisely once.
The structure theorem 
for free operads~\cite[Section~4]{markl:handbook} immediately gives:

\begin{proposition}
\label{psano_na_letisti}
For each $l, \Rada k1n \geq 0$ and $\Fr_{\rada 11}(l)$ as above, one has the
canonical isomorphism of abelian groups
\[
\B^l_{\Rada k1n} \cong \Fr_{\rada 11}(l).
\]
\end{proposition}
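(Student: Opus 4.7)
The plan is to identify $(l;\Rada k1n)$-trees with canonical normal forms for elements of the multilinear subspace $\Fr_{\rada 11}(l)$ of the quotient operad $\Fr = \Gamma(\Rada o1n,\mu,e)/I$, where $I$ is the ideal generated by associativity and unitality of $\mu$.

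First, I would apply the structure theorem for free operads~\cite[Section~4]{markl:handbook}: the abelian group $\Gamma(\Rada o1n,\mu,e)(l)$ has as a $\bbZ$-basis the set of planar rooted trees with $l$ labeled leaves in which each internal vertex of arity $k$ is decorated by a generator of the same arity. Restricting to the multilinear part where each $o_i$ appears exactly once yields trees with precisely $n$ vertices decorated by $\Rada o1n$ (the white vertices, of prescribed arities $\Rada k1n$), together with some binary $\mu$-vertices and some nullary $e$-vertices.

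Second, I would pass to a normal form modulo $I$. The associativity relation $\mu \circ_1 \mu = \mu \circ_2 \mu$ identifies any maximal connected subtree built from $\mu$-vertices with a single corolla, which we draw as a black vertex of arity $\geq 2$. The unitality relation $\mu \circ_i e = \id$ eliminates any $e$-vertex whose outgoing edge enters a $\mu$-vertex, i.e.\ into a black vertex after the previous contraction. Orienting both relations as rewrite rules, termination follows from a routine well-founded measure (e.g.\ the lexicographic pair consisting of the total number of $\mu$-vertices and the number of $e$-to-$\mu$ edges), and local confluence is a finite check on the critical overlaps between the two rules. By Newman's lemma, every element of $\Gamma_{\rada 11}(l)$ has a unique normal form modulo $I$, and the normal-form trees are exactly those described in Definition~\ref{d2b}: $n$ labeled white vertices of arities $\Rada k1n$, black vertices of arity $\geq 2$, special vertices whose outgoing edge terminates at a white vertex, and no edge joining two black vertices.

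Third, the bijection between normal forms and $(l;\Rada k1n)$-trees, extended $\bbZ$-linearly, produces the desired isomorphism $\B^l_{\Rada k1n} \cong \Fr_{\rada 11}(l)$; under this identification the operadic compositions described after Definition~\ref{d2b} correspond to grafting-and-normalizing in $\Fr$. The main obstacle is the confluence step: while termination is straightforward, the critical diamond to check is the configuration of a $\mu$-vertex with an $e$-vertex on one input and another $\mu$-vertex on the other, where one must verify that applying unitality first (absorbing the $e$) and applying associativity first (rebracketing the two $\mu$'s) reduce to the same normal form. All such diamonds close by applying the two relations in sequence, but this needs to be recorded to ensure that the correspondence $T \mapsto [T]$ is injective on $(l;\Rada k1n)$-trees.
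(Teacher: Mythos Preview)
Your proposal is correct and follows essentially the same route as the paper. The paper's own justification is a single sentence invoking the structure theorem for free operads from~\cite[Section~4]{markl:handbook}; you have simply unpacked what that theorem says in this case and made the passage to the quotient by associativity and unitality explicit via a terminating, locally confluent rewriting system. That elaboration is sound, and the normal forms you describe coincide exactly with the $(l;\Rada k1n)$-trees of Definition~\ref{d2b}.
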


Let  $\UAss$ be the operad for unital associative algebras.

\begin{definition}
\label{Jarunka_pusa.}
\hskip -.4em\footnote{Multiplicative {\it nonsymmetric} operads were defined
in \cite{GV}. } 
A {\em multiplicative operad with unit\/} is an operad $\O$ together with a
morphism $p: \UAss \to \O$, i.e.~an operad under
$\UAss$. Equivalently, $\O$ has a distinguished `multiplication' $\mu \in
\O(2)$ and a `unit' $e \in \O(0)$ 
such that $\mu \circ_1 \mu = \mu \circ_2 \mu \in \O(3)$ and $\mu
\circ_1 e = \mu \circ_2 e = \id \in \O(1)$.
\end{definition}

Let $\MultSO$ be the category of multiplicative operads in $\Ab$ with
unit, i.e.~the comma-category $\UAss/\SO$, where $\SO$ is the category
of (symmetric) operads.  Objects of $\MultSO$ are therefore couples
$(\O,p)$ of an (ordinary) operad and the structure map. We will,
however, drop the structure map from the notation since it will always
be clear from the context whether we mean a multiplicative operad with
unit or an ordinary operad.  Proposition~4.11 of~\cite{bbm} gives an
alternative characterization of the category $\MultSO$:

\begin{proposition}[\cite{bbm}]
\label{na_letisti}
The category of algebras over the
$\bbN$-coloured operad $\B$ is isomorphic to the category of
multiplicative operads with unit.
\end{proposition}

\begin{remark}The operad $\Fr = (\Fr,\mu,{\bf 1})$ is clearly the
free multiplicative operad with unit generated by $o_i \in \Fr(k_i)$, $1
\leq i \leq n$.
Proposition~\ref{psano_na_letisti} can also be deduced from
Proposition~\ref{na_letisti} and the standard relation between free algebras
of a certain type and the operad governing this type of algebras.
\end{remark}

We will show that $\NAT$ is isomorphic to a certain operad of natural
operations acting on objects of $\MultSO$.

\begin{definition} 
\label{hh}
Let $\Nat^l_{k_1,\ldots,k_n}$ be, for $l,\Rada k1n \in \bbN$, the abelian group
of linear maps
\[ 
\beta_{\omu} : 
\O(k_1) \ot \cdots \ot \O(k_n) \to \O(l)
\] 
indexed by multiplicative operads with unit $\omu \in \MultSO$ such
that, for any morphism $\varphi:\omu \rightarrow \pnu$ of
multiplicative operads with unit, the diagram
\begin{equation}
\label{Kolecko}
\hskip -4cm
\SquaRE
{\O(k_1) \ot \cdots \ot \O(k_n)}{\O(l)}
{\P(k_1) \ot \cdots \ot \P(k_n)}{\P(l)}
{\beta_{\omu}}{\varphi(k_1)\ot \cdots \ot\varphi(k_n)}
{\varphi(l)}{\beta_{\pnu}}
\raisebox{-4em}{\rule{0pt}{0pt}}
\end{equation}   
commutes. The abelian groups $\Nat^l_{k_1,\ldots,k_n}$ form
an $\bbN$-coloured operad $\Nat$.
\end{definition}  

\begin{proposition}
\label{Pisu_v_Koline}
The $\bbN$-coloured operads $\Nat$ and $\NAT$ are naturally isomorphic.
\end{proposition}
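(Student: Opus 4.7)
The plan is to construct $\bbN$-coloured operad morphisms $\Psi: \Nat \to \NAT$ and $\Phi: \NAT \to \Nat$ explicitly and to verify that they are mutually inverse.

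\textbf{Construction of $\Psi$ (the easy direction).} For each pair $(V, M) \in \SYMCAT$, the components
\[
\End_M(k) := \underline{V}(M^{\otimes k}, M), \quad k \geq 0,
\]
assemble into an endomorphism operad, and the monoid structure on $M$ makes it multiplicative with multiplication $\mu \in \End_M(2)$ and unit $e \in \End_M(0)$. A morphism $F: (V, M) \to (W, N)$ in $\SYMCAT$ induces a morphism of multiplicative operads $F_*: \End_M \to \End_N$ by componentwise application of $F$, so $(V, M) \mapsto \End_M$ is a functor $\SYMCAT \to \MultSO$. For $\beta \in \Nat^l_{k_1, \ldots, k_n}$ I would set $\Psi(\beta)_{(V, M)} := \beta_{\End_M}$; then the $\NAT$-naturality square for $\Psi(\beta)$ along $F$ coincides with the $\Nat$-naturality square for $\beta$ along $F_*$.

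\textbf{Construction of $\Phi$ (the harder direction).} For each multiplicative operad $(\O, \mu, e)$ the task is to produce a canonical pair $(V_\O, M_\O) \in \SYMCAT$ with $\End_{M_\O} \cong \O$ as multiplicative operads. The natural candidate is the PROP $P(\O)$ associated to $\O$: a strict symmetric monoidal category whose objects are the natural numbers (with addition as the monoidal product), whose hom-groups are built freely from the operations of $\O$ subject to the operadic equivariance and composition, and in which the generating object $M_\O$ is a unital monoid via $\mu$ and $e$, satisfying $\underline{P(\O)}(M_\O^{\otimes k}, M_\O) = \O(k)$. Set $\Phi(\alpha)_{\O} := \alpha_{(P(\O), M_\O)}$. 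A morphism $\varphi: \O \to \P$ of multiplicative operads extends uniquely to a $\SYMCAT$-morphism $P(\varphi): (P(\O), M_\O) \to (P(\P), M_\P)$, so $\NAT$-naturality of $\alpha$ along $P(\varphi)$ translates into $\Nat$-naturality of $\Phi(\alpha)$ along $\varphi$.

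\textbf{Mutual inverses.} The equality $\Phi \Psi = \id$ is immediate: for any $\O$ one has $\End_{M_\O} = \O$ by construction, whence $\Phi \Psi(\beta)_{\O} = \beta_{\End_{M_\O}} = \beta_{\O}$. For $\Psi \Phi = \id$, observe that for each $(V, M) \in \SYMCAT$ the universal property of the PROP yields a canonical $\SYMCAT$-morphism $R: (P(\End_M), M_{\End_M}) \to (V, M)$ sending the generating monoid to $M$ and restricting to the identity on the hom-groups $\underline{P(\End_M)}(M_{\End_M}^{\otimes k}, M_{\End_M}) = \End_M(k) = \underline{V}(M^{\otimes k}, M)$. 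Naturality of $\alpha$ along $R$ then yields $\alpha_{(V, M)} = \alpha_{(P(\End_M), M_{\End_M})} = \Psi \Phi(\alpha)_{(V, M)}$.

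\textbf{Main obstacle.} The core technical work is in the PROP construction and its universal property: verifying that $P(\O)$ is a well-defined additive symmetric monoidal category whose endomorphism operad on the generating object recovers $\O$ together with its multiplicative structure, and that the functor $R$ above exists, is unique, and is the identity on the relevant hom-groups. These are standard facts about the operad--PROP adjunction in the $\Ab$-enriched, symmetric setting, but they must be laid out carefully, since the identification $\alpha_{(V, M)} = \alpha_{(P(\End_M), M_{\End_M})}$ rests precisely on $R$ being the identity on those hom-groups. Once this PROP machinery is in place, the remaining verifications that $\Psi$ and $\Phi$ respect the $\bbN$-coloured operadic composition and the symmetric group actions are a matter of unpacking definitions.
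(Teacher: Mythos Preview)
Your proposal is correct and follows essentially the same route as the paper: the maps you call $\Psi$ and $\Phi$ are the paper's $r$ and $i$, built from the endomorphism-operad functor $\End:\SYMCAT\to\MultSO$ and its left adjoint $P$ (the associated PROP), and the two identities $\Phi\Psi=\id$, $\Psi\Phi=\id$ are verified exactly as you describe, via $\End_{(P(\O),1)}\cong\O$ and the counit $R=U:(P(\End_{(V,M)}),1)\to(V,M)$ acting as the identity on the relevant hom-groups. The paper isolates the PROP construction and the properties of $U$ into a preliminary lemma (your ``main obstacle''), but the content is the same.
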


The proof makes use of a relation between the categories $\SYMCAT$ and $\MultSO$.
Each object $(V,M) \in \SymCat$ has its {\em endomorphism
operad\/} 
\[
\End_{(V,M)} := \{\End_{(V,M)}(n)\}_{n \geq 0} \ \ \mbox{where} \ \ 
\End_{(V,M)}(n) := \underline V (M^{\ot n},M),
\]
with the convention that $M^{\ot 0} := {\bf 1}$, the unit object of the
monoidal category $V$. 
The unital monoid structure on $M$ clearly determines an operad map 
$p:\UAss \to \End_{(V,M)}$ so that  $\End_{(V,M)} \in \MultSO$.
It is not hard to check that this construction extends to a functor
\[
\End:\SymCat \to \MultSO.
\]
Observe that $\End((\Delta S)_{alg}) \cong \UAss$.
With this notation, the operation $\alpha_{(V,M)}$ in~(\ref{Kopou_v_baraku.})
is a map
\[
\alpha_{(V,M)} : \End_{(V,M)}(k_1) \ot \cdots \ot \End_{(V,M)}(k_n)
\to \End_{(V,M)}(l).
\]

\begin{lemma}
\label{Smutek} 
The functor $\End$ has a left adjoint 
\[
P: \MultSO \rightarrow \SymCat.
\]
The unit of this adjunction is the identity and the counit
\[
U : 
(P(\End_{(V,M)}),1) \to (V,M)
\]
is the strict monoidal functor whose effect on the Hom-sets is the identity. 
Moreover, the functor $\End$ is surjective on morphisms (but not full). 
\end{lemma}

\begin{remark}
A functor $F : \C' \to \C''$ is surjective on morphism if for any
objects $a'',b'' \in \C''$ and a morphism $f'' \in \C''(a'',b'')$ there exist
objects $a',b' \in \C'$  and a morphism $f' \in \C'(a',b')$ such that $F(a') =
a''$, $F(b') = b''$ and $f'' = F(f')$. On the other hand, being full
means that, for any objects  $a',b' \in \C'$, the map $F :
\C'(a',b') \to \C'(F(a'),(b'))$ is an epimorphism. These two
conditions are obviously different.

\end{remark}

\begin{proof}[Proof of Lemma~\ref{Smutek}]  
For an operad $\O$, let $P(\O)$ be the PROP freely generated by $\O$,
i.e.~the additive symmetric monoidal category whose objects are
natural numbers with monoidal structure given by the addition, 
and the spaces of morphisms are
\[
\underline{P(\O)}(m,n): = 
\bigoplus_{i_1+\cdots+i_n = m}\O(i_1)\otimes\cdots \otimes \O(i_n),\
\mbox { for $m,n \geq 0$.}
\]  
If $\O$ is multiplicative with multiplication $\mu \in \O(2)$ and unit
${e} \in \O(0)$,  
$P(\O)$ has a distinguished unital monoid
$1 = (1,\mu,{e})$ such that
\begin{equation}
\label{Ani_nezavola.}
\End_{(P(\O),1)} \cong \omu
\end{equation}
in $\MultSO$.
It is now straightforward to verify the rest of the lemma.
\end{proof}

\begin{proof}[Proof of Proposition~\ref{Pisu_v_Koline}]
We construct mutually inverse morphisms $i : \NAT \to \Nat$ and $r :
\Nat \to \NAT$.  For $\alpha \in \NAT^l_{\Rada k1n}$ and $\omu \in
\MultSO$, let $i(\alpha)_{\omu} := \alpha_{(P(\O),1)}$.  We need
to show that the system $i(\alpha)_{\omu}$ is natural in the sense that
diagram~(\ref{Kolecko}) with $i(\alpha)_{\omu} =
\alpha_{(P(\O),1)}$ in place of $\beta_{\omu}$ and
$i(\alpha)_{\pnu} = \alpha_{(P(\P),1)}$ 
in place of $\beta_{\pnu}$, commutes for each map
\begin{equation}
\label{straslive_mlaceni}
\varphi: 
\omu \cong \End_{(P(\O),1)} \to \End_{(P(\P),1)} \cong \pnu
\end{equation}
of multiplicative operads with unit, with the isomorphisms given
by~(\ref{Ani_nezavola.}).  It follows from
naturality~(\ref{Za_chvili_na_bruslicky.})  of $\alpha$
that such diagram commutes if $\varphi$ is induced by a functor $F
: (P(\O),1) \to (P(\P),1)$. Lemma~\ref{Smutek} however shows that all
maps $\varphi$ in~(\ref{straslive_mlaceni}) are induced in this way,
so $i(\alpha):= \{i(\alpha)_{\omu}\}$ is a well-defined family in
$\Nat^l_{\Rada k1n}$. The assignment $\alpha \mapsto i(\alpha)$
obviously extends into an operad map $i : \NAT \to \Nat$.

The inverse map $r$ is constructed as follows.  
For $\beta \in \Nat^l_{\Rada k1n}$ and $(V,M) \in \SymCat$, 
let $r(\beta)_{(V,M)} := \beta_{\End_{(V,M)}}$.  The naturality of
the family $r(\beta)_{(V,M)}$ is clear in this case, so $r(\beta)_{(V,M)} \in
\NAT^l_{\Rada k1n}$. The correspondence $\beta \mapsto r(\beta) :=
\{r(\beta)_{(V,M)}\}$ 
defines a map $r : \Nat \to \NAT$ of $\bbN$-coloured operads.

By definition and~(\ref{Ani_nezavola.}), 
one has $i(r(\beta))_{\omu} = r(\beta)_{(P(\O),1)} =
\beta_{\End_{(P(\O),1)}} = \beta_{\omu}$, so $ir = \id_\Nat$. 
For the opposite composition, 
one gets $r(i(\alpha))_{(V,M)} = i(\alpha)_{\End_{(V,M)}}
  = \alpha_{(P(\End_{(V,M)}),1)}$.
To show that $ri = \id_{\NAT}$, i.e.~that
\begin{equation}
\label{aa}
\alpha_{(P(\End_{(V,M)}),1)} = \alpha_{(V,M)}\
\mbox { for each $(V,M) \in \SymCat$,}
\end{equation}
one uses naturality with respect to the counit 
 $U : (P(\End_{(V,M)}),1) \to (V,M)$.
By  Lemma~\ref{Smutek} again the following diagram commutes
\[
\SquarE
{\underline{V}(M^{\ot k_1},M)\otimes \cdots \otimes
  \underline{V}(M^{\ot k_n},M)}{\underline{V}(M^{\ot l},M)}
{\underline{V}(M^{\ot k_1},M)\otimes \cdots \otimes
  \underline{V}(M^{\ot k_n},M)}{\hphantom{.}\underline{V}(M^{\ot l},M).}
{\alpha_{(P(\End_{(V,M)}),1)}}{\id \ot \cdots \ot \id}{\id}{\alpha_{(V,M)}}
\raisebox{-4em}{\rule{0pt}{0pt}}
\]
This proves~(\ref{aa}) and establishes $ri = \id_\NAT$ as well.
\end{proof}

The last step in the proof of Theorem~A is:

\begin{proposition}
\label{Jarka_si_zapomela_ruzicku.}
The $\bbN$-coloured operads $\Nat$ and $\B$ are naturally isomorphic.
\end{proposition}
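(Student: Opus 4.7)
The plan is to use the universal property of the free multiplicative operad $\Fr = \Fr(k_1,\ldots,k_n)$ from~(\ref{Zitra_je_pohreb_Jerusciny_maminky.}) together with the identification $\B^l_{\Rada k1n} \cong \Fr_{\rada 11}(l)$ of Proposition~\ref{psano_na_letisti}. I would construct mutually inverse maps $\Phi : \B \to \Nat$ and $\Psi : \Nat \to \B$, following the same overall pattern as the proof of Proposition~\ref{Pisu_v_Koline}.

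First I would define $\Phi$. By Proposition~\ref{na_letisti}, every multiplicative operad $\omu$ is canonically an algebra over the $\bbN$-coloured operad $\B$, so each $b \in \B^l_{\Rada k1n}$ produces a linear map $\Phi(b)_\omu : \O(k_1)\otimes\cdots\otimes \O(k_n) \to \O(l)$. Naturality with respect to morphisms $\varphi : \omu \to \pnu$ of multiplicative operads is automatic, since such a $\varphi$ is in particular a morphism of $\B$-algebras. Concretely, if $b$ is represented by an $(l;\Rada k1n)$-tree $T$, then $\Phi(b)_\omu$ is computed by decorating the white vertices of $T$ by the inputs, the black vertices by $\mu \in \O(2)$, the special vertices by $e \in \O(0)$, and composing along the tree.

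Next I would define $\Psi$. Given $\beta \in \Nat^l_{\Rada k1n}$, set $\Psi(\beta) := \beta_\Fr(o_1\otimes\cdots\otimes o_n) \in \Fr(l)$. The key claim is that $\Psi(\beta)$ lies in the multilinear piece $\Fr_{\rada 11}(l) \cong \B^l_{\Rada k1n}$. To prove this, consider, for each $\lambda = (\Rada \lambda 1n) \in \bbZ^n$, the unique endomorphism $\phi_\lambda : \Fr \to \Fr$ of multiplicative operads determined by $\phi_\lambda(\mu) := \mu$, $\phi_\lambda(e) := e$ and $\phi_\lambda(o_i) := \lambda_i o_i$. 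Naturality of $\beta$ applied to $\phi_\lambda$ yields
\[
\phi_\lambda\bigl(\Psi(\beta)\bigr) \;=\; \lambda_1\cdots\lambda_n\cdot \Psi(\beta).
\]
Now $\Fr(l)$ splits as a direct sum indexed by the multi-degrees $(m_1,\ldots,m_n)$ in the generators $o_i$, and $\phi_\lambda$ acts as multiplication by $\prod_i \lambda_i^{m_i}$ on the component of multi-degree $(m_1,\ldots,m_n)$. Comparing components, a nonzero $(m_1,\ldots,m_n)$-piece of $\Psi(\beta)$ would force the polynomial identity $\prod \lambda_i^{m_i} = \prod \lambda_i$ in $\bbZ[\lambda_1,\ldots,\lambda_n]$, which holds only for $(m_1,\ldots,m_n) = (1,\ldots,1)$.

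Finally, I would verify that $\Phi$ and $\Psi$ are mutually inverse and compatible with the $\bbN$-coloured operad structures. For $\Psi\Phi = \id$: given $b \in \B^l_{\Rada k1n} \cong \Fr_{\rada 11}(l)$, unwinding the definition of $\Phi(b)_\Fr$ on the inputs $(o_1,\ldots,o_n)$ literally reproduces $b$ inside $\Fr(l)$. For $\Phi\Psi = \id$: for any $\omu \in \MultSO$ and any $(x_1,\ldots,x_n) \in \O(k_1)\times\cdots\times \O(k_n)$, there is a unique morphism $\omega : \Fr \to \omu$ of multiplicative operads with $\omega(o_i) = x_i$; naturality of $\beta$ along $\omega$ identifies both $\Phi(\Psi(\beta))_\omu(x_1\otimes\cdots\otimes x_n)$ and $\beta_\omu(x_1\otimes\cdots\otimes x_n)$ with $\omega\bigl(\beta_\Fr(o_1\otimes\cdots\otimes o_n)\bigr)$. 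Compatibility with operadic composition and symmetric group action follows from the same universal-property argument applied to compositions of free multiplicative operads. I expect the main obstacle to be the multilinearity step, which rests on the fact that naturality with respect to integer scalar morphisms is already enough to kill all non-multilinear components; here the $\Ab$-enrichment is essential, and it is also the reason why the remark following the proposition notes that $\NAT$ is strictly larger than $\B$ in non-additive settings.
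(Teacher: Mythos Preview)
Your proposal is correct and follows essentially the same approach as the paper's proof: the maps you call $\Psi$ and $\Phi$ are precisely the paper's $\xi$ and $\gamma$, the multilinearity step via naturality under the scaling endomorphisms $\phi_\lambda$ is identical to the paper's argument using $F^{\Frmu}_{u_1o_1,\ldots,u_no_n}$, and your verification of $\Phi\Psi=\id$ via the universal map $\omega:\Fr\to\omu$ is exactly diagram~(\ref{JarUska}). The only cosmetic difference is that you introduce $\Phi$ through the $\B$-algebra structure of Proposition~\ref{na_letisti} rather than by the explicit formula $\gamma(\eta)_{\omu}(x_1\otimes\cdots\otimes x_n)=F^{\omu}_{\Rada x1n}(l)(\eta)$, but these are the same map.
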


\begin{proof}
Denote, as in~(\ref{Zitra_je_pohreb_Jerusciny_maminky.}), by $\Frmu$
be the free multiplicative operad with unit generated by $n$
operations $o_i \in \Fr(k_i)$ of arities $k_i$, $1 \leq i \leq n$.
Let $\omu$ be an arbitrary multiplicative operad with unit. Given
elements $x_i \in \O(k_i)$, $1 \leq i \leq n$, there exists a unique
map $F^{\omu}_{\Rada x1n} : \Frmu \to \omu$ of multiplicative operads
with unit specified by $F^\omu_{\Rada x1n}(o_i) := x_i$, $1 \leq i
\leq n$. Observe that
\begin{equation}
\label{unaven}
F^{\Frmu}_{\Rada o1n} = \id_{\Fr}
\end{equation}
and that, for each map $\varphi : \omu \to \pnu$ of
multiplicative operads with unit, one has
\begin{equation}
\label{Jarunecka}
F^\omu_{\Rada x1n} = \varphi \circ    F^{\pnu}_{\rada{\varphi(x_1)}{\varphi(x_n)}}.
\end{equation}

For a natural operation $\beta \in \Nat^l_{\Rada k1n}$ and $x_i \in
\O(k_i)$ $1 \leq i \leq n$ as above, one has, by
naturality~(\ref{Kolecko}), the commutative diagram
\begin{equation}
\label{JarUska}
\SquaRE{\Fr(k_1) \ot \cdots \ot \Fr(k_n)}{\Fr(l)}
{\O(k_1)\ot \cdots \ot \O(k_n)}{\hphantom{.}\O(l).}
{\beta_{\Frmu}}{F^\omu_{\Rada x1n}(k_1) \ot 
\cdots \ot  F^\omu_{\Rada x1n}(k_n)}
{F^\omu_{\Rada x1n}(l)}{\beta_{\omu}}
\raisebox{-4em}{\rule{0pt}{0pt}}
\end{equation}
We will need also a particular case of~(\ref{JarUska}) when $\O = \Fr$
and $x_i = u_i o_i$, for some scalars $u_i \in \bbZ$, $1 \leq i \leq n$:
\begin{equation}
\label{JarUskA}
\SquaRE{\Fr(k_1) \ot \cdots \ot \Fr(k_n)}{\Fr(l)}
{\Fr(k_1) \ot \cdots \ot \Fr(k_n)}{\hphantom{.}\Fr(l).}
{\beta_{\Frmu}}{F^{\Frmu}_{u_1o_1,\ldots,u_no_n}(k_1) \ot \cdots \ot
 F^{\Frmu}_{u_1o_1,\ldots,u_no_n}(k_n)} 
{F^{\Frmu}_{u_1o_1,\ldots,u_no_n}(l)}{\beta_{\Frmu}}
\raisebox{-4em}{\rule{0pt}{0pt}}
\end{equation}

Recall that $\Fr_{\rada 11}(l)$ denotes the subspace of
elements of $\Fr(l)$ containing each generator $\Rada o1n$ 
precisely once. We begin the proof by observing that each natural operation
$\beta \in \Nat^l_{\Rada k1n}$ determines an element $\xi(\beta) \in
\Fr(l)$ by
\begin{equation}
\label{=}
\xi(\beta) := \beta_{\Frmu}(o_1 \otimes \cdots \ot o_n) \in \Fr(l).
\end{equation}
We need to show that $\xi(\beta)$ in
fact belongs to $\Fr_{\rada 11}(l)  \cong \B^l_{\Rada k1n}$.  
Clearly, $\Fr(l)$ decomposes as
\[
\Fr(l) = \textstyle\bigoplus_{\Rada j1n \geq 0} \Fr_{\Rada j1n}(l), 
\]
where $\Fr_{\Rada j1n}(l) \subset \Fr(l)$ is the subspace of elements
having precisely $j_i$ instances of $o_i$ for each $1 \leq i \leq
n$. The endomorphism $F^\fmu_\pom : \Frmu \to \Frmu$ 
acts on $\Fr_{\Rada j1n}(l)$ 
by the multiplication with $u_1^{j_1}
\cdots u_n^{j_n}$ --  $\Fr_{\Rada j1n}(l) \subset \Fr(l)$
is, in fact, characterized by this property.  The element $\xi(\beta)$
uniquely decomposes as $\xi(\beta) = \sum_{\Rada j1n \geq 0}
\xi(\beta)_{\Rada j1n}$, for some $\xi(\beta)_{\Rada j1n} \in \Fr_{\Rada
j1n}(m)$.

Let us turn our attention to diagram~(\ref{JarUskA}). 
By the definition of the map $\F^\fmu_\pom$, one has
\[
\beta_{\Frmu}(F^\fmu_\pom(k_1) \ot \cdots \ot
  F^\fmu_\pom (k_n))(o_1 \ot \cdots \ot o_n)
  =\beta_{\Frmu}(u_1o_1 \ot \cdots \ot u_no_n),
\]
while the linearity of $\beta_{\Frmu}$ implies
\[
\beta_{\Frmu}(u_1o_1 \ot \cdots \ot u_no_n) =
u_1\cdots u_n \cdot \beta_{\Frmu} (o_1 \ot \cdots \ot o_n) = u_1\cdots
u_n \cdot \xi(\beta).
\]
On the other hand
\[
F^\fmu_\pom(l)(\beta_{\Frmu})(o_1 \ot \cdots \ot o_n) = 
F^\fmu_\pom(l)(\xi(\beta)) =  
\sum_{\Rada j1n \geq 0}  u_1^{j_1}\cdots u_n^{j_n}  
\cdot \xi(\beta)_{\Rada j1n},
\]
therefore the commutativity of~(\ref{JarUskA}) means that 
\[
u_1 \cdots u_n \cdot \xi(\beta) =
\sum_{\Rada j1n \geq 0}   u_1^{j_1}\cdots u_n^{j_n}  
\cdot \xi(\beta)_{\Rada j1n}
\]
for each $\Rada u1n \in \bbZ$. We conclude that $\xi(\beta)_{\Rada j1n} = 0$ if
$(\Rada j1n) \not= (\rada 11)$, so $\xi(\beta) = \xi(\beta)_{\rada 11}
\in \Fr_{\rada 11}(l)$.

Let us prove that, vice versa, each element $\eta \in \Fr_{\rada 11}(l)
\cong \B^l_{\Rada k1n}$ determines a natural operation $\gamma(\eta) \in
\Nat^l_{\Rada k1n}$ by the formula 
\begin{equation}
\label{Jarunka-pusa}
\gamma(\eta)_{\omu}(x_1 \ot \cdots \ot  x_n) = 
F^\omu_{\Rada x1n}(l)(\eta),\ \mbox { for each $x_i \in \O(k_i)$,
$1 \leq i \leq n$.}
\end{equation}
The linearity of the map
$\gamma(\eta)_{\omu} : \O(k_1) \ot \cdots \O(k_n) \to \O(l)$
defined in this way easily follows from the fact that $\eta$ belongs
to the subspace $\Fr_{\rada 11}(l) \subset \Fr(l)$. The commutativity
of diagram~(\ref{Kolecko}) for the family $\gamma(\eta) =
\{\gamma(\eta)\}_{\omu}$ defined by~(\ref{Jarunka-pusa}) is
equivalent to
\[
\varphi (\gamma(\eta)_{\omu})(x_1 \ot \cdots \ot  x_n) =
\gamma(\eta)_{\pnu}(\varphi (k_1) \ot \cdots \ot
\varphi(k_n))(x_1 \ot \cdots \ot  x_n) 
\] 
for each $x_i \in \O(k_i)$, $\mezi in$. This, by the
definition~(\ref{Jarunka-pusa}) of $\gamma(\eta)$, means
\[
F^\omu_{\Rada x1n}(l)(\eta) = 
\varphi \circ F^{\pnu}_{\rada{\varphi(x_1)}{\varphi(x_n)}}(l)(\eta)
\]
which follows from~(\ref{Jarunecka}).
So $\gamma(\eta) = \{\gamma(\eta)_{\omu}\}$ is a well-defined
natural operation in $\Nat^l_{\Rada k1n}$.

Let us prove that the maps $\xi : \Nat^l_{\Rada k1n} \to \Fr_{\rada
11}(l)$ and $\gamma :  \Fr_{\rada 11}(l)  \to  \Nat^l_{\Rada k1n}$ defined
by the above correspondences are mutual inverses. We have, for $\eta
\in \Fr_{\rada 11}(l)$, 
\[
\xi(\gamma(\eta)) = \gamma(\eta)_\fmu(o_1 \ot \cdots \ot o_n) = 
F^\fmu_{\Rada o1n}(l)(\eta) = \eta,
\]
where the last equality follows from~(\ref{unaven}). For the opposite
composition, one has
\begin{eqnarray*}
\gamma(\xi(\beta))_\omu(x_1 \ot \cdots \ot x_n) &=&
F^\omu_{\Rada x1n}(l)(\xi(\beta)) =
F^\omu_{\Rada x1n}(l)(\beta_\fmu)(o_1 \ot \cdots o_n)
\\
&=&
\beta_\omu(F^\omu_{\Rada x1n}(k_1)(o_1) \ot \cdots \ot 
F^\omu_{\Rada x1n}(k_n)(o_n))
\\
&=&
\beta_\omu(x_1 \ot \cdots \ot x_n),
\end{eqnarray*}
where we used the commutativity of~(\ref{JarUska}) and the definition of
$F^\omu_{\Rada x1n}$. The proof is finished by observing that the two
mutually inverse families of maps  $\xi : \Nat^l_{\Rada k1n} \to \Fr_{\rada
11}(l) \cong \B^l_{\Rada k1n}$ and  $\gamma :  
\B^l_{\Rada k1n} \cong \Fr_{\rada 11}(l)  \to \Nat^l_{\Rada k1n}$
define two mutually inverse homomorphisms $\xi : \Nat \to \B$ and
$\gamma : \B \to \Nat$ of $\bbN$-coloured operads.
\end{proof}

We  have a nonsymmetric
variant of Proposition~\ref{na_letisti}.

\begin{proposition}
\label{dnes_Jarca_na_houbicky}
The category of algebras over the $\bbN$-coloured operad $T$ is
isomorphic to the category of multiplicative nonsymmetric operads with
unit.
\end{proposition}

\section{Crossed interval groups}
\label{2_den}

In this section we introduce crossed interval groups. We believe that
this new concept is important on its own and we therefore decided to
include also some results and examples not directly related to our
immediate applications. The reader who is not interested in this more
general picture may read only the material necessary for this paper --
Definition~\ref{ci}, Example~\ref{IS}, Proposition~\ref{Jaruska},
Definition~\ref{ttt}, Lemma~\ref{sss}, Lemma~\ref{freeHobject} -- and
skip the rest.

 {\em The category of intervals\/} $\I$ is the category whose objects
are the linearly ordered sets $\int n := \{-1,\rada 0n,n+1\}$, $n \geq
-1$, and morphisms non-decreasing maps $f :\int m \to \int n$ such
that $f(-1) = -1$ and $f(m+1) = n+1$.  The points $-1,n+1$ are called
the {\em boundary points\/} of $\int n$ and the ordinal $[n]:=
\{0,\ldots,n\}$ is the {\em interior\/} of $\int n$. Intuitively, one
may think of $\int n$ as of the interval with endpoints $-1$ and
$n+1$; morphisms in $\I$ are then non-decreasing, boundary preserving
maps of intervals.  The following statement goes back, probably, to
Gabriel and Zisman \cite[Section~III.1.1]{GZ} but became known
under the name Joyal's duality due to an influential preprint
\cite{joyal:disks}. The reader who is interested to understand the
details of Joyal's duality including higher dimensional generalization
is referred to \cite{MZ}.

\begin{proposition}[Joyal's duality]  The category 
$\I$ is isomorphic to the opposite
category $\Delta^\op$ of the (skeletal) category $\Delta$ of  nonempty finite
ordered sets and their non-decreasing maps.
\end{proposition} 

\begin{proof}
To construct the isomorphism $\joy :
\Delta^\op \stackrel\cong\to \I$,
recall that $\Delta$ has objects the finite ordinals $[n] = \{\rada
0n\}$, $n \geq 0$.  Morphism of $\Delta$ are generated by the face
maps $\delta_i : [n-1] \to [n]$ (misses $i$) and the degeneracy maps
$\sigma_i : [n+1] \to [n]$ (hits $i$ twice), $i = \rada0n$. Then $\joy
([n]) := \int {n-1}$, for $n \geq 0$. Moreover, $d_i=\joy(\delta_i) :
\int {n-1} \to \int {n-2}$ is the map that hits $i-1$ twice, and $s_i=
\joy(\sigma_i): \int {n-1} \to \int {n}$ the map that misses $i$. 
\end{proof}

The category $\I$ contains a subcategory of open maps. The objects of
this subcategory are intervals and the morphisms are interval
morphisms $f : \int m \to \int n$ which preserve the interiors i.e.
$f(\{\rada 0m\}) \subset \{\rada 0n\}$. This subcategory is isomorphic
to the (skeletal) category of all finite ordinals called
$\Delta_{alg}$ (algebraic $\Delta$). The category $\Delta_{alg}$ has a
well-known universal property -- it is a monoidal category freely
generated by a monoid.  The following definition is motivated
by~\cite[Definition~1.1]{fiedorowicz-loday:TAMS91}.

\begin{definition}
\label{ci}
A {\em crossed interval group\/} $\H$ is a sequence of groups
$\{H^n\}_{n \geq 0}$ equipped with the following structure:
\begin{enumerate}
\item[(a)]
a small category $\I H$ called the {\em associated category\/} such that
the objects of $\I H$ are the ordinals $\int n = \{-1,\rada 0n,n+1\}$, 
$n \geq -1$,
\item[(b)]
$\I H$ contains $\I$ as a subcategory,
\item[(c)]
$\Aut_{\I H}(\int n) = {(H^{n+1})}^{\op}$, $n \geq -1$, and
\item[(d)] any morphism $f: \int m \to \int n$ in $\I H$ can be
uniquely written as $\phi \circ h$, where $\phi \in \Hom_\I(\int
m,\int n)$ and $h \in {(H^{m+1})}^\op$.
\end{enumerate}
\end{definition}

We would like to extend the Joyal's duality to the case of crossed
interval groups. To this end, we need

 \begin{definition}
\label{cc} 
A {\em crossed cosimplicial group\/} $\H$ is  
a sequence of groups $\{H^n\}_{n \geq 0}$ equipped with the following structure:   
\begin{enumerate}
\item[(a)]a small category $H\Delta$ such that
the objects of $H\Delta$ are the ordinals $[n] = \{0,\rada ,n\}$, 
$n \geq 0$,
\item[(b)]
$H\Delta$ contains $\Delta$ as a subcategory,
\item[(c)]
$\Aut_{H\Delta}([ n]) = H^n$, $n \geq 0$, and
\item[(d)] any morphism $f: [ m] \to [ n]$ in $H\Delta$ can be
uniquely written as $h\circ\phi  $, where $\phi \in \Hom_\Delta(
[m], [n])$ and $h \in H^n$.
\end{enumerate}
\end{definition}

Now, given a crossed interval group $\H$ we construct its {\it Joyal's
dual\/} crossed cosimplicial group $J\H$ with the associated category
$(JH)\Delta$  by taking the same sequence of
groups $\{H^n\}_{n\ge 0}$ and putting $(JH)\Delta ([n],[m]) := \I H
(\int {m-1},\int {n-1})$. One can easily check using the ordinary
Joyal's duality that we thus obtain a crossed cosimplicial
group. Moreover, we also obtain an isomorphism of the associated
categories $j_H:((JH)\Delta)^{\op} \rightarrow \I H .$ We therefore
see that crossed interval groups and crossed cosimplicial groups
are equivalent notions.  

An important crossed interval group is introduced in the following
example.

\begin{example}
\label{IS}
Let $\I S$ be the category whose objects are the ordinals 
$\int n = \{-1,0,\ldots,n,n+1\}$, 
$n \geq -1$, and morphisms are maps $f : \int m \to \int n$ such that
\begin{enumerate}
\item[(a)]
a linear order on each non-empty fiber $f^{-1}(i)$, $i \in \int n$ is
specified,
\item[(b)]
$f$ preserves the endpoints and their order, 
that is  $f(-1) = -1$ and $f(m+1) = n+1$, and
\item[(c)] 
$-1$ is the minimal element in $f^{-1}(-1)$ and $m+1$ is
the maximal element in $f^{-1}(n+1)$.
\end{enumerate}
\end{example}
The group $\Aut_{\I S}(\int n)$ equals the symmetric group $S_{n+1}$,
whence the notation. We leave as an exercise to verify that the $\I
S$ defined above is an associated category of a crossed interval
group. We denote this crossed interval group $\S$.

Let $\Delta S$ be the category whose objects are the ordinals $[n] :=
\{\rada 0n\}$ and morphisms are maps $g : [m] \to [n]$ with a
specified linear order on the fibers $g^{-1}(i)$, $i \in [n]$. Recall
that $\Delta S$, called the {\em symmetric category\/}, is an
associated category of a {\em crossed simplicial group\/} $S_*$ in the
sense of~\cite[Definition~1.1]{fiedorowicz-loday:TAMS91}.  There is
also an `algebraic' version of the symmetric category $(\Delta
S)_{alg}$\label{moz} 
introduced by Day and Street in \cite{DayStreet:substitution}, which has
a universal property similar to $\Delta_{alg}$: the category $(\Delta
S)_{alg}$ is the symmetric monoidal category freely generated by a
(noncommutative) monoid.

Analogously to the inclusion $\Delta_{alg}\hookrightarrow \I $, one has
the inclusion
\[
I : (\Delta S)_{alg} \hookrightarrow \IS
\]
such that $I([n]) := \int n$. If $g : [m] \to [n]$ is a morphism in $
(\Delta S)_{alg}$, then $I(g)$ is the unique morphism $f : \int m \to
\int n$ such that $f|_{\{\rada 0m\}} = g$. The image of $I$ is the
subcategory of {\em open maps\/}, i.e.~maps $f : \int m \to \int n$
such that $f(\{\rada 0m\}) \subset \{\rada 0n\}$.  Likewise, there is
the inclusion
\begin{equation}
\label{Napise_Magda_jeste?}
\iota : \IS \hookrightarrow \Delta S
\end{equation}
such that $\iota(\int n) := [n+2]$ and, for $f : \int m \to \int n$,
$g =\iota(f)$ is the map with $g(i) := f(i-1)$, for $i \in [n+2]$.  

\begin{example}
\label{Z2}
Another, rather trivial example of a crossed interval group is the
 `constant' group $\Z_2.$ The objects are the intervals $\int n$, $n
 \geq -1 ,$ and the morphisms are maps $f : \int m \to \int n$ such
 that $f$ restricted to the interior of the interval is order
 preserving and $f$
preserves the boundary, that is, $f(\{-1,m+1\}) = \{-1,n+1\}.$ It is not hard to see that all the automorphisms groups
 in this category are $\bbZ_2$.

\end{example}

\begin{example}
\label{Flip}
The crossed interval group $\Flip$. The objects are again the
intervals. The morphisms from $\int n$ to $\int m$ are maps $f : \int
m \to \int n$ that preserve the boundary, which are either order
preserving or order reversing.  Again, it is not difficult to see that
all automorphism groups are $\bbZ_2$. The unique nontrivial automorphism
of an interval is the flip around its center. The crossed interval
groups $\Z_2$ and $\Flip$ are, however, not isomorphic.
\end{example}

\begin{example} 
One can construct an example of a crossed interval group 
${\mathcal B}{\it raid}$ 
if one replaces
symmetric groups by braid groups in Example~\ref{IS}. The braid
analogue of Example~\ref{Z2} will be the `constant' crossed interval
group $\Z .$ We do not know if there is a braid analogue of the
crossed interval group $\Flip$ of Example~\ref{Flip}.

\end{example}  

\begin{remark}The examples of $\Z_2$ and $\Flip$ indicate that the classification of the
crossed interval groups is, probably, more subtle than the
classification of crossed simplicial groups given in
\cite{fiedorowicz-loday:TAMS91}.
\end{remark}
One advantage of the crossed interval groups over crossed cosimplicial groups  is the same variance
of morphisms as in Loday-Fiedorowicz crossed simplicial groups. So,
many basic results for crossed simplicial groups can be carried over
to crossed interval groups without changing the formulas from
\cite{fiedorowicz-loday:TAMS91}. For example, we have that for any
$h\in H^n$ and any $\phi\in \I (\int m , \int n)$ there exist unique
$\phi^*(h)\in H^m$ and $h^*(\phi) \in \I (\int m ,\int n )$ such that
$h\circ \phi = h^*(\phi) \circ \phi^*(h) .$ Using these
correspondences one can give an alternative characterization of
crossed interval groups repeating
\cite[Proposition~1.6]{fiedorowicz-loday:TAMS91} verbatim.

Let us define an {\em automorphism of an interval\/} $\int n$ as a
bijection $\int n$ to $\int n$ which preserves the set of boundary
points.  Clearly, the group $\Aut\int n $ of all automorphisms is
isomorphic to $S_{n+1}\times \bbZ_2$, where $S_{n+1}$ denotes the
symmetric group on $n+1$ elements and $\bbZ_2 := \bbZ/2\bbZ$.

The following characterization of the crossed interval groups is
slightly more complicated than its crossed simplicial analogue (see
Proposition 1.7 from~\cite{fiedorowicz-loday:TAMS91}).

\begin{proposition}
\label{Jaruska}
A crossed interval group $\H$ is a cosimplicial set $H^\bullet =
\{H^n\}_{n \geq 0}$ such that each $H^n$ is a group, together with a group
homomorphism
\begin{equation}
\label{Jar}
\rho_n: H^n \rightarrow  \Aut \int {n-1} \cong  S_n \times \bbZ_2
\end{equation}
for each $n \geq 0$, such that, for $h,h' \in H^n$,
\begin{enumerate} 
\item 
$d_i(hh') = d_{\overline{h}(i)}(h')d_i(h)$, 
where $\overline{h}(i) := \rho_n(h)(i-1) +1$, $0 \leq i \leq n+1$, 
\item 
$s_i(hh') = s_{\underline{h}(i)}(h')s_i(h)$, 
where $\underline{h}(i) := \rho_n(h)(i)$,  $0\le i \le n-1$.  
\item 
Moreover, the following set diagrams are commutative:
\[
\Square 
{\int n}{\int {n-1}}{\int n}{\int {n-1}}%
{d_{\overline{h}(i)}}{d_i(h)}{h}{d_i}\hspace{30mm}
%\]  
 %\[
\Square 
{\int {n-2}}{\int {n-1}}{\int {n-2}}{\int {n-1}}%
{s_{\underline{h}(i)}}{s_i(h)}{h}{s_i}
\]
where $0 \leq i \leq n+1$ in the first and   $0 \leq i \leq n-1$ in
the second diagram.
\end{enumerate}
\end{proposition}

The proof is basically identical to the proof of Proposition~1.7
in \cite{fiedorowicz-loday:TAMS91} but we have to take into account
that among the coface operators $d_i$ in $\I$ the first and the last
operators are not open maps of intervals. One can check that under the
correspondence $h^* : \I (\int m , \int n) \to \I (\int m , \int n)$ 
the open maps go to the open maps, so the first
and the last operators can only be stable or permuted by $h^*$. This
explains the factor $\bbZ_2$ in~(\ref{Jar}).

We also observe that $\rho_*$ is a homomorphism of crossed interval
groups $ \H \rightarrow \S \times \Z_2$, so
the value of $\rho_n$ is determined by its action on the set of open
coface operators $\{d_i;\ 1\le i \le n\}$ or equally on the set of
codegenaracies (which are automatically open maps of intervals) $\{s_i
;\ 0\le i \le n-1\}$, and the homomorphism $\rho_0:H^0 \rightarrow \bbZ_2$.

\begin{definition} 
\label{ttt}
Let $\H$ be a crossed interval group and let $\C$ be a category. An
{\em $\H$-object\/} in $\C$ is a functor $X:(\I H)^{\op}\rightarrow \C .$ A
{\em morphism\/} between two $\H$-objects is a natural transformation
of functors.
\end{definition}  

Similarly to the crossed simplicial case we have the following simple
characterization of $\H$-objects (compare with Lemma 4.2
of~\cite{fiedorowicz-loday:TAMS91}). We formulate it, as well as
Lemma~\ref{freeHobject} below, in the more intuitive language of 
elements, though, of
course, objects of a general cocomplete category do not have `elements.' The
concerned reader may easily translate the statements into the formally
correct language of arrows. 

\begin{lemma}
\label{sss}
The category of $\H$-objects in a cocomplete  category $\C$ is equivalent to the category whose
objects are cosimplicial objects $X^\bullet$ in $\C$ equipped with the
following additional structure:
\begin{enumerate} 
\item right group actions
$X^n\times H^n \rightarrow X^n$, $n \geq 0$,
\item coface relations 
$d_i(xh) = d_{\overline{h}(i)}(x)d_i(h)$, for
$x \in X^n$, $h \in H^n$, $0 \leq i \leq n+1$,   
\item codegeneracy relations $s_i(xh) = s_{\underline{h}(i)}(x)s_i(h)$,
for $x \in X^n$, $h \in H^n$, $0 \leq i \leq n-1$, 
\end{enumerate}
and whose morphisms are cosimplicial morphisms which are degreewise
equivariant. 
\end{lemma}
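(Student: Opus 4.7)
The plan is to adapt the proof of the analogous characterization for crossed simplicial groups, Lemma~4.2 of~\cite{fiedorowicz-loday:TAMS91}, to the interval setting, using the unique factorization from Definition~\ref{ci}(d) together with the explicit structure identified in Proposition~\ref{Jaruska}.

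In one direction, starting from an $\H$-object $X:(\I H)^{\op}\to\C$, I would set $X^n := X(\int{n-1})$ for $n\geq 0$. Restricting $X$ along the inclusion $\I\hookrightarrow \I H$ of Definition~\ref{ci}(b) and invoking Joyal's duality $\Delta\cong \I^{\op}$ endows $X^\bullet$ with a cosimplicial structure, the coface and codegeneracy operators being the images under $X$ of the morphisms $d_i$ and $s_i$ of $\I$. The identification $\Aut_{\I H}(\int{n-1}) = (H^n)^{\op}$ from Definition~\ref{ci}(c), combined with the contravariance of $X$, produces a right action of $H^n$ on $X^n$. The relations~(2) and~(3) are then obtained by applying $X$ to the commutative squares of Proposition~\ref{Jaruska}(3).

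In the opposite direction, given a cosimplicial object $X^\bullet$ in $\C$ equipped with right $H^n$-actions satisfying (2) and (3), I would reconstruct a functor $\widetilde X : (\I H)^{\op} \to \C$ by declaring $\widetilde X(\int{n-1}) := X^n$ and, for a morphism $f:\int m\to\int n$ of $\I H$ in the unique normal form $f = \phi\circ h$ with $\phi\in\Hom_\I(\int m,\int n)$ and $h\in(H^{m+1})^{\op}$, letting $\widetilde X(f)$ be the composite of the $H^{m+1}$-action of $h$ on $X^{m+1}$ with the cosimplicial map corresponding to $\phi$ under Joyal's duality. Functoriality amounts to showing that composing two such morphisms and re-normalizing the composite via the rewriting rule $h\circ\phi = h^*(\phi)\circ\phi^*(h)$ stated before Proposition~\ref{Jaruska} yields the same image in $\C$ as composing the individual images. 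By induction on the length of a decomposition into generators, this reduces to the relations~(2) and~(3) together with the standard cosimplicial identities.

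The main obstacle is the bookkeeping in this functoriality check: one must propagate automorphisms past the open coface and codegeneracy maps, consistently track the indices $\overline h(i)$ and $\underline h(i)$, and verify compatibility with group multiplication in each $H^n$. Once this is in place, the two constructions are visibly mutually inverse, and since a natural transformation of $(\I H)^{\op}$-functors amounts to the same data as a degreewise equivariant cosimplicial map (the equivariance corresponding to naturality on the automorphism subgroups $\Aut_{\I H}(\int{n-1})$), the desired equivalence of categories follows.
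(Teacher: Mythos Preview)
Your proposal is correct and matches the paper's approach: the paper does not give an explicit proof of this lemma but merely refers to the analogous Lemma~4.2 of~\cite{fiedorowicz-loday:TAMS91}, and your argument is precisely the adaptation of that proof to the interval setting using the unique factorization of Definition~\ref{ci}(d) and the explicit rewriting rules of Proposition~\ref{Jaruska}.
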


In the above lemma, $\overline{h}(i)$ and~$\underline{h}(i)$ have the
same meaning as in Proposition~\ref{Jaruska}.  There is a~natural
restriction functor $\U_{\H}$ from the category of $\H$-objects in $\C$ to the
category of cosimplicial objects in $\C.$ This functor has a left
(right) adjoint provided $\C$ is cocomplete (complete). We will denote
the left adjoint by $\F_{\H} .$ It is easy to get explicit formulas
for $\F_{\H}$ similar to the formulas
from~\cite[Definition~4.3]{fiedorowicz-loday:TAMS91}:

\begin{lemma}
\label{freeHobject} 
Let $X= X^\bullet$ be a cosimplicial object in a 
cocomplete category $\C .$ Then $\F_{\H}(X)$ is an $\H$-object with
$\F_{\H}(X)^n := X^n\times H^n$, $n \geq 0$, and with cofaces and
codegeneracies given by
\begin{eqnarray*}
d_i(x,h) := (d_{\overline{h}(i)}(x),d_i(h)),\ 
\mbox{for $x \in X^n$, $h \in H^n$, $0 \leq i \leq n+1$},
\\        
s_i(x,h) := (s_{\underline{h}(i)}(x),s_i(h)),\ 
\mbox{for $x \in X^n$, $h \in H^n$, $0 \leq i \leq n-1$}.
\end{eqnarray*}
The unit $\iota: X \mapsto F_{\H}(X)$ of the monad $F_{\H}$ generated
by the adjunction $\F_{\H}\dashv \U_{\H}$ is given by the cosimplicial map
$\iota(x) := (x,1) .$
\end{lemma}

\section{A homotopy equivalence of cosimplicial totalizations.}
\label{jarKA}

\def\S{\EuScript S}  \def\id{{\it id}}
\def\Ab{\EuScript Ab} \def\bbZ{{\mathbb Z}} \def\bbN{{\mathbb N}}
\def\frakM{{\mathfrak M}} \def\A{\EuScript A}
\def\hatF{{\widetilde F}} \def\sgn{{\rm sgn}}
\def\hatH{{\widetilde H}} \def\Tot{{\rm Tot\/}}
\def\hatB{{\widetilde B}}
\def\Deltaop{{\Delta^{{\it op}}}}
\def\Coch{\EuScript Coch}\def\CochChain{\EuScript {C}och\EuScript{C}hain}
\def\Ch{\EuScript Ch} \def\uCh{\underline{\EuScript Ch}}
\def\Chain{\EuScript Chain}
\def\C{\EuScript C} \def\Im{{\it Im\/}} \def\Ker{{\it Ker\/}}
\def\Set{\EuScript Set}
\def\bsigma{{\overline{\sigma}}}
\def\Rada#1#2#3{{#1_{#2},\ldots,#1_{#3}}}
\def\unit{{1 \!\! 1}} \def\whD{{\widehat D}}

This section is devoted to a result about cosimplicial totalizations of
$\S$-chain complexes, where $\S$ is the crossed interval group
introduced in Example~\ref{IS}. We recall 
first the notion of cosimplicial
totalization that associates to a cosimplicial chain complex
$C^{\bullet}_*$ a chain complex $\uTot(C^{\bullet}_*).$ The main result of
this section, Theorem~\ref{t2}, states that the cochain complex
associated to $F_\S(C^\bullet_*)$, where $C^\bullet_*$ is a cosimplicial
chain complex with finitely generated torsion-free components, is
canonically homotopy equivalent to the cochain complex associated to
$C^\bullet_*$. Our main technical tool will be a cochain version of the
method of acyclic models developed in Appendix~\ref{sec:acycl-models-theor}.

This result should be considered as dual to the result about crossed
simplicial group $\Delta S$ which says that the geometric realization
of simplicial topological space $F_{\Delta S}(X_{\bullet})$ is
homotopy equivalent to the geometric realization of $X_{\bullet}.$ The
proof of this statement is relatively easy.  Theorem 5.3 from
\cite{fiedorowicz-loday:TAMS91} provides an equivariant homeomorphism
of geometric realizations
\[
|F_G(X_\bullet)| \rightarrow |G_\bullet|\times |X_\bullet |
\] 
for any crossed simplicial group $G.$
Then it is enough to observe  that  the geometric realization of the simplicial set $\Delta S_\bullet $ is
contractible by explicitly constructing a simplicial contraction.

Unfortunately, the methods of \cite{fiedorowicz-loday:TAMS91} do not
work for the crossed interval groups.  So we need to develop a
different approach.  Our argument is based on a corollary (Proposition
\ref{p1}) to a version of Acyclic Model Theorem for cochain functors
which we formulate in the Appendix~\ref{sec:acycl-models-theor}.

Let  $\Chain$ be
the category   of $\bbZ$-graded chain
complexes of abelian groups. It is  a closed symmetric monoidal
category with respect to the usual tensor product of chain complexes.
Let $\Chain^\Delta$ be the category of cosimplicial objects in
$\Chain$.  Objects of $\Chain^\Delta$ are systems $C^\bullet_* =
(C^n_*,\pa)$, $n \geq 0$, of chain complexes with face and degeneracy
operators $d_i : C^{n-1}_* \to C^n_*$, $s_i: C^{n+1}_* \to C^n_*$, $0
\leq i \leq n$, which are chain maps and satisfy the standard
cosimplicial identities.

Let  $\CochChain$  be the category of
non-negatively graded cochain complexes in the category $\Chain$.
Objects of $\CochChain$ are systems $(E^*_*,d,\pa)$ consisting of
abelian groups $E^m_k$, $m \geq 0$, $k \in \bbZ$, and linear maps $d:
E^m_k \to E^{m+1}_k$ ({\em horizontal\/} differentials), $\pa :
E^m_{k+1} \to E^m_k$ ({\em vertical\/} differentials) that square to
zero and satisfy $\pa d + d \pa = 0$.

Each cosimplicial chain complex $C^\bullet_* \in \Chain^\Delta$ 
determines an object $C^*_*$ of
$\CochChain$, with the differential $d : C_k^m
\to C_k^{m+1}$ given by the standard formula
\[
d(c) := \sum_{0 \leq i \leq m+1} (-1)^{i+m} d_i (c),\ c \in C^m_k,\ m
\geq 0,\ k \in \bbZ.
\]
The correspondence $C^\bullet_* \mapsto C^*_*$ defines  
the functor `replace bullet by star' $\bs : \Chain^\Delta
\to \CochChain$.

For $E^*_* \in \CochChain$ denote by $|E^*_*|^*$ the cochain complex
with $|E^*_*|^m := \prod_{n -k = m}E^n_k$, with the differential given by
\[
d(a^0,a^1,a^2,\ldots) := (-\pa a^0,d a^0 - \pa a^1, d a^1 - \pa a^2,\ldots),
\]
for $(a^0,a^1,a^2,\ldots) \in E^0_* \times E^1_* \times E^2_* \times
\cdots$.  Finally, define the {\em total complex\/} of a cosimplicial
chain complex $C^\bullet_*$ to be the cochain complex
$\uTot(C^\bullet_*)^* := |\bs(C^\bullet_*)|^*$. The bar over $\Tot$ indicates
that we totalize with respect to the cosimplicial structure.
So we have three functors forming a commutative triangle:
\newcommand{\VVtriangle}[6]{
\setlength{\unitlength}{2em}
\begin{picture}(5,3.6)(0,-.1)
\thicklines
\put(0,2.5){\makebox(0,0){$#1$}}
\put(5,2.5){\makebox(0,0){$#2$}}
\put(2.5,0){\makebox(0,0){$#3$}}

\put(1,2.5){\vector(1,0){2.6}}
\put(0.5,2){\vector(1,-1){1.5}}
\put(4.5,2){\vector(-1,-1){1.5}}

\put(1,1){\makebox(0,0)[rt]{$#5$}}
\put(2.5,2.8){\makebox(0,0)[b]{$#4$}}
\put(4,1.2){\makebox(0,0)[lt]{$#6$}}
\end{picture}
}
\[
\VVtriangle {\Chain^\Delta}{\CochChain}{\Coch}{\bs}{\uTot}{|-|}
\]

\begin{warning}
Objects of $\CochChain$ are particular types of bicomplexes of
abelian groups (with a slightly unusual degree convention), but our
definition of $|E^*_*|^*$ differs from the traditional definition of the
total complex of a bicomplex in that it involves the direct {\em product\/}
instead of the direct {\em sum\/}.
\end{warning}

\begin{example}
The components of the  operad $\Big$ form a cosimplicial,
non-negatively graded chain complex $B^\bullet_*(n)$ with $B^q_m(n) =
\bigoplus_{k_1+\cdots +k_n=m} B^q_{k_1,\ldots,k_n}$. The corresponding
total space $\uTot(B^\bullet_*(n))$ equals the arity $n$ component
$\Big(n)$ of the dg operad $\Big$. Another example is the cosimplicial
chain complex determined, in the same manner, by the components of the
suboperad~$\Tam \subset \Big$.
\end{example}

To simplify the notation, we will drop, in the rest of this section,
the stars $*$ from the notation whenever the nature of the
corresponding object is clear from the context.
The unit of the monad from  Lemma~\ref{freeHobject} induces the canonical
inclusion of cosimplicial chain complexes
\[
\iota : C^\bullet  \to F_\S(C^\bullet),
\]
given by $\iota(a) := (a,1)$.

\begin{definition}
The {\em miraculous map\/} $m : \bs(F_\S(C^\bullet)) \to
\bs(C^\bullet)$ is defined by the formula
\[
m(a,\sigma) := \sgn(\sigma) a,\ a \in C^q,\ \sigma \in S_q,\ q \geq 0.
\]
\end{definition}

We call $m$ miraculous because it is not induced by a cosimplicial
map. Obviously $m \circ \bs(\iota) = \id$.

\begin{proposition}
\label{p44}
The miraculous map $m$ is a morphism in $\CochChain$.
\end{proposition}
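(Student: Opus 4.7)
My plan is to verify that $m$ commutes with each of the two differentials of $\CochChain$ separately. Vertical compatibility is immediate: the chain complex structure on $F_\S(C^\bullet)^q = C^q_* \times S_q$ (see Example~\ref{ex44}) has $\pa(a,\sigma) = (\pa a, \sigma)$, and $m$ is multiplication by the scalar $\sgn(\sigma)$, so $\pa \circ m = m \circ \pa$.

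The substance is compatibility with the horizontal differential $d = \sum_{i=0}^{q+1}(-1)^{i+q} d_i$. Using the coface formula $d_i(a,\sigma) = (d_{\overline{\sigma}(i)}(a), d_i(\sigma))$ from Lemma~\ref{freeHobject}, I would write
\[
m(d(a,\sigma)) = \sum_{i=0}^{q+1} (-1)^{i+q} \sgn(d_i(\sigma))\, d_{\overline{\sigma}(i)}(a),
\qquad
d(m(a,\sigma)) = \sgn(\sigma) \sum_{j=0}^{q+1} (-1)^{j+q} d_j(a),
\]
and observe that, since $\overline{\sigma}$ is a bijection of $\{0,\ldots,q+1\}$ by Proposition~\ref{Jaruska}, the substitution $j = \overline{\sigma}(i)$ reduces the equality of these two expressions to the term-by-term sign identity
\begin{equation}
\label{ast}
\sgn(d_i(\sigma)) = \sgn(\sigma) \cdot (-1)^{i+\overline{\sigma}(i)},\qquad \sigma \in S_q,\ 0 \le i \le q+1.
\end{equation}

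The main obstacle is proving \eqref{ast}. For this I would describe $d_i(\sigma) \in S_{q+1}$ explicitly from the defining relation $\sigma \circ d_{\overline{\sigma}(i)} = d_i \circ d_i(\sigma)$ of Proposition~\ref{Jaruska}(3), using the ordered-fiber condition that characterizes $\IS$. For interior $1 \le i \le q$, this forces $d_i(\sigma)$ to be the permutation of $\{0,\ldots,q\}$ whose one-line notation is obtained from that of $\sigma$ by shifting all values $\ge i$ up by one and then inserting the new value $i$ at position $\overline{\sigma}(i)$. A direct inversion count then expresses $\operatorname{inv}(d_i(\sigma)) - \operatorname{inv}(\sigma)$ as $A+B$, where $A = |\{k < \overline{\sigma}(i) : \sigma(k) \ge i\}|$ and $B = |\{k \ge \overline{\sigma}(i) : \sigma(k) \le i-2\}|$; a short cardinality argument based on the identity $\sigma(\overline{\sigma}(i)-1) = i-1$ (which encodes how $\overline{\sigma}$ is built from $\sigma$) gives $A - B = \overline{\sigma}(i) - i$, whence $A + B \equiv i + \overline{\sigma}(i) \pmod 2$. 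The extremal indices $i = 0$ and $i = q+1$ require a small separate verification: here $d_i(\sigma)$ is simply $\sigma$ extended by a fixed endpoint, so $\sgn(d_i(\sigma)) = \sgn(\sigma)$, matching $(-1)^{0+0} = (-1)^{2(q+1)} = 1$.
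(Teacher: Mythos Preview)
Your argument is correct and follows the same route as the paper: both reduce the claim to the sign identity $(-1)^{\overline{\sigma}(i)}\sgn(\sigma) = (-1)^i \sgn(d_i(\sigma))$, after which the paper simply declares it ``a direct verification.'' You go further and actually supply that verification via an inversion count, which is a welcome addition; note only that the paper's description of $d_i(\sigma)$ as ``doubling the $i$th input'' (with $\overline\sigma$ extending $\sigma$ by fixed endpoints) is phrased differently from your ``insert value $i$ at position $\overline\sigma(i)$,'' but both encode the same permutation determined by the commuting square of Proposition~\ref{Jaruska}.
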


\begin{proof}
Recall (see Lemma~\ref{freeHobject}) 
that the differential acts on $(a,\sigma) \in C^q \times S_q$ by
\[
d^q(a,\sigma) = 
\sum_{0 \leq i \leq q+1} (-1)^i(d_{\bsigma(i)}(a),d_i(\sigma)),
\]
where $\bsigma : \{0,\ldots,q+1\} \to \{0,\ldots,q+1\}$ 
fixes $0,q+1$ and equals $\sigma$
on $\{1,\ldots,q\}$. The coboundary operators $d_i : S_q \to S_{q+1}$
are given by  $d_0(\sigma) := \unit \times \sigma$, $d_{q+1}(\sigma) := \sigma
\times \unit$ and $d_i(\sigma)$ is the permutation
obtained by doubling the $i$th input of $\sigma$. These coboundary operators
were introduced in~\cite[Example~12]{markl:de}.
So, for $(a,\sigma) \in C^q \times S_q = F_\S(C^\bullet)^q$,
\[
m d^q(a,\sigma) = \sum_{0 \leq i \leq q+1} (-1)^i
\sgn(d_i(\sigma)) d_{\bsigma(i)}(a)  
\]
while 
\[
d^q m (a,\sigma) = \sum_{0 \leq i \leq q+1} (-1)^i
\sgn(\sigma) d_i(a) =  \sum_{0 \leq i \leq q+1} (-1)^{\bsigma(i)}
\sgn(\sigma)d_{\bsigma(i)}(a).
\]
Comparing the terms at $d_{\bsigma(i)}(a)$ one sees that 
$m$ will be a cochain map if we prove that
\[
(-1)^{\bsigma(i)} \sgn(\sigma) =  (-1)^i \sgn(d_i(\sigma)), \ \mbox {
for } 0 \leq i \leq q+1.
\]
The last equality can be established by a direct verification.
\end{proof}

\begin{theorem}
\label{t2}
Let $C^\bullet$ be a cosimplicial chain complex with finitely generated
torsion-free components. Then 
\[
\uTot (\iota) : \uTot(C^\bullet) \to \uTot(F_\S(C^\bullet)) 
\mbox { and }
|m| : \uTot(F_\S(C^\bullet)) \to \uTot(C^\bullet)
\]
are natural mutually inverse homotopy equivalences. Moreover, 
$|m| \circ \uTot(\iota) = \id$ and there is a natural cochain homotopy between 
$\uTot(\iota) \circ |m|$ and
$\id$.
\end{theorem}

\begin{proof}

The equation $|m| \circ \uTot(\iota) = \id$ is trivial. Therefore it
remains to prove the existence of a natural cochain homotopy between
$\uTot(\iota) \circ |m|$ and $\id.$ We do it by applying
Proposition~\ref{p1} of Appendix~\ref{sec:acycl-models-theor}.

For $A \in \Chain$, let $A^\# := \uChain(A,\bbZ)$, where $\bbZ$
are the integers considered as the chain complex concentrated in
degree~$0$, denote the linear dual.  The components $(A^\#)_n$ of the dual
$A^\#$ are given by
\begin{equation}
\label{JARkA}
(A^\#)_n = A_{-n}^\#,\ n \in \bbZ.
\end{equation}

For $q \geq 0$ we
denote by $\Delta[q]$ the standard simplicial $q$-simplex and $M[q]$ the
simplicial abelian group spanned by $\Delta[q]$. Let $D[q]$ be the
cosimplicial abelian group given as the componentwise linear dual of
$M[q].$  We will interpret $M[q]$ (resp.~$D[q]$) 
as a simplicial (resp.~cosimplicial) chain complex concentrated in
degree zero, with trivial vertical differentials.

Let $\Ch \subset \Chain$ be the  subcategory consisting of
chain complexes with finitely generated torsion-free components. 
To apply Proposition \ref{p1}
we put $\D = \Ch^\Delta$, $A = B = \bs(F_\S(-))$, $f = \bs(\iota)
\circ m$, $g =\id$ and $\frakM = \{D[0],D[1],D[2],\ldots\}$. We prove
that $F_\S(-)^q$ is corepresented, with models $\frakM$, in
Lemma~\ref{corep} below.  It is also obvious that $(\bs(\iota) \circ
m)^0 = \id^0$ and that $F_\S(D[q])^\bullet$ is concentrated in chain
degree $0$.  The acyclicity of models is established in Proposition
\ref{p22}. Hence, all the conditions of Proposition~\ref{p1} are
satisfied and we complete the proof of the theorem.
\end{proof}

\begin{lemma}\label{corep} The functor 
$F : \Ch^\Delta \to \Chain$ defined by
$F(C^\bullet_*) := F_\S(C^\bullet_*)^q$ 
is corepresented with the model $D[q]$.
\end{lemma}
\begin{proof} Observe that the functor $F_\S:\Chain^\Delta\to\Chain^\Delta$ takes objects from $\Ch^\Delta$ to the objects of $\Ch^\Delta.$ 
Then the functor $F$ factorizes  as  a composite
$$\Ch^\Delta
\stackrel{F_\S}{\longrightarrow}\Ch^\Delta\stackrel{ev_q}{\longrightarrow}\Chain.$$
Moreover, the full subcategory $\Ch$ is the category of strongly
dualizable objects in $\Chain$. We conclude that $F$ is corepresented
by Lemmas \ref{corepresentable} and \ref{endo}. 
\end{proof}

\noindent 
{\bf Analysis of
$F_\S(D[q])$.}
Let us describe  $F_\S(D[q])$
explicitly.
{\rm
To fix the notation, recall that $\Delta[q]$ is a simplicial set with
\[
\Delta[q]_n 
= \{(\Rada a0n);\ 0 \leq a_0 \leq \cdots \leq a_n \leq q\}, \ n \geq 0,
\]
the simplicial structure being given by
\begin{eqnarray*}
\partial_i(\Rada a0n) &:=& (\Rada a0{i-1},\Rada a{i+1}n), \mbox { and}
\\
s_i(\Rada a0n) &:=& (\Rada a0{j},\Rada a{j}n), \ \mbox { for }
0 \leq i \leq n.
\end{eqnarray*}
The simplicial abelian group $M[q]$ generated by $\Delta[q]$ has
the basis 
\[
\{(\Rada a0n);\ 0 \leq a_0 \leq \cdots \leq a_n \leq q\}.
\]
Let us denote by 
\[
\{\langle \Rada a0n\rangle;\ 0 \leq a_0 \leq \cdots \leq a_n \leq q\}.
\]
the dual basis of $D[q]^n$. The coboundary
operators are then given by
\[
d_i \langle \Rada a0n \rangle = \sum_{a_{i-1} \leq s \leq a_i}
\langle \Rada a0{i-1},s,\Rada a{i+1}n \rangle
\]
where we assume $a_{-1} := 0$ and $a_{n+1} := q$ so the formula makes
sense also for $i = 0$ or $i = n+1$.
For instance, for the generator $\langle 1,1 \rangle \in D[3]^1$ we have
\[
d_0 \langle 1,1 \rangle = \langle 0,1,1 \rangle + \langle 1, 1,1
\rangle,\
d_1 \langle 1,1 \rangle =  \langle 1,1,1 \rangle,\ 
d_2 \langle 1,1 \rangle = \langle 1,1,1 \rangle + \langle 1, 1,2
\rangle + \langle 1,1,3 \rangle.
\]
The above formulas show that $D[q]$ is not spanned by a cosimplicial set.

It is clear from the above explanation that elements of $F_\S(D[q])^n$
are linear combinations of the expressions of the form
\[
(\langle \Rada a0n \rangle,\sigma),\ \mbox { for } 
0 \leq a_0 \leq \cdots \leq a_n \leq q,\
\sigma \in S_n.
\]
The differential in $\bs(F_\S(D[q]))$ is given by the formula
\[
d^n (\langle \Rada a0n \rangle,\sigma) = 
\sum_{0 \leq i \leq n+1} (-1)^i (d_{\bsigma(i)}\langle \Rada a0n
\rangle,d_i(\sigma) ),
\]
where the definition of $\bsigma(i)$ and $d_i(\sigma)$ was recalled in
the proof of Proposition~\ref{p44}.
}

In~\cite[page~481]{markl:de} we introduced simple permutations whose
definition we briefly recall. We defined first, for each $\sigma \in
S_n$, a natural number $g(\sigma)$, $0 \leq g(\sigma) \leq n$,
which we called the {\em grade\/} of $\sigma$, as follows.  The grade
of the unit $\unit_n \in S_n$ is $n-1$, $g(\unit_n) := n-1$.  For
$\sigma \not= \unit_n$, let
\begin{eqnarray*}
a(\sigma) &:=& \max\{i;\ \sigma = \unit_i \times \tau \mbox { for some }
\tau \in S_{n-i}\}, \mbox { and }
\\
c(\sigma) &:=& \max\{j;\ \sigma = \lambda \times \unit_j \mbox { for some }
\lambda \in S_{n-j}\}.
\end{eqnarray*}
There clearly exists a unique $\omega = \omega(\sigma) \in S_{n
  - a(\sigma) - c(\sigma)}$ such that
$\sigma = \unit_{a(\sigma)} \times \omega(\sigma) \times
\unit_{c(\sigma)}$.
Let, finally, $b(\sigma)$ be the number of ``doubled strings'' in
$\omega(\sigma)$, 
\[
b(\sigma) := \{1 \leq s < k;\ \omega(s+1) = \omega(s) +1\}.
\]
The grade of $\sigma$ was then defined by 
\[
g(\sigma) := a(\sigma) + b(\sigma) + c(\sigma),
\]
see Figure~1 of~\cite{markl:de} for examples.
We called $\sigma \in S_k$, $k \geq 1$, {\em simple\/} if
$g(\sigma) = 0$. 

Now, for each $\sigma \in S_n$, $\sigma \not= \unit_n$, 
we define a unique simple $\kappa
= \kappa(\sigma) \in S_k$, $k = n - g(\sigma)$, by contracting all
``multiple strings'' of $\omega(\sigma)$ into ``simple'' ones. 
We put $\kappa(\unit_n):= \unit_1$. Since, for $\sigma \in S_n$ and $0 \leq i \leq
n + 1$, $\kappa(\sigma) = \kappa(d_i (\sigma))$, for each cosimplicial
chain complex $C^\bullet$ one has the decomposition
\begin{equation}
\label{eq98}
F_\S(C^\bullet) = 
\bigoplus_{\chi \mbox {\scriptsize \rm\ simple}} F^\chi_\S(C^\bullet)
\end{equation}
in which the subspaces
\[
F^\chi_\S(C^\bullet) := \{(a,\sigma)\in F_\S(C^\bullet); 
\ \kappa(\sigma) = \chi \}
\]
are invariant under coboundary operators. Let us see what happens if
$C^\bullet = D[q]$.

Consider a simple permutation  $\chi \in S_m$ different from $\unit_1 \in
S_1$. We claim that there is a direct sum decomposition
\begin{equation}
\label{eq556}
F^\chi_\S(D[q]) = \bigoplus_{0 \leq x_0 \leq \cdots \leq x_m \leq q}
F^\chi_\S(D[q])_{\Rada x0m},
\end{equation}
where $F^\chi_\S(D[q])_{\Rada x0m}$ is the smallest subspace of
$F_\S(D[q])$ invariant under the coboundary operators and
containing 
\[
(\langle \Rada x0m \rangle , \chi) \in F^\chi_\S(D[q])^m.
\] 
Before we formulate a formal argument that $F^\chi_\S(D[q])$ indeed
decomposes as in~(\ref{eq556}), we give the following
example.

\begin{example}
{\rm
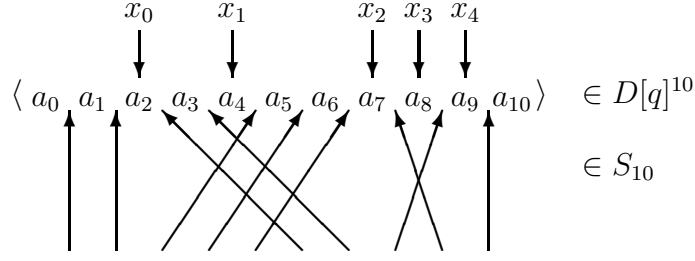
\begin{figure}[t]
\[
\setlength{\unitlength}{1,5em}
\begin{picture}(12,5)
\thicklines
\put(0,0){\vector(0,1){3}}
\put(1,0){\vector(0,1){3}}
\put(2,0){\vector(2,3){2}}
\put(3,0){\vector(2,3){2}}
\put(4,0){\vector(2,3){2}}
\put(5,0){\vector(-1,1){3}}
\put(6,0){\vector(-1,1){3}}
\put(7,0){\vector(1,3){1}}
\put(8,0){\vector(-1,3){1}}
\put(9,0){\vector(0,1){3}}
\put(-1,3){\makebox(0,0)[br]{$\langle$}}
\put(-.5,3){\makebox(0,0)[b]{$a_0$}}
\put(.5,3){\makebox(0,0)[b]{$a_1$}}
\put(1.5,3){\makebox(0,0)[b]{$a_2$}}
\put(2.5,3){\makebox(0,0)[b]{$a_3$}}
\put(3.5,3){\makebox(0,0)[b]{$a_4$}}
\put(4.5,3){\makebox(0,0)[b]{$a_5$}}
\put(5.5,3){\makebox(0,0)[b]{$a_6$}}
\put(6.5,3){\makebox(0,0)[b]{$a_7$}}
\put(7.5,3){\makebox(0,0)[b]{$a_8$}}
\put(8.5,3){\makebox(0,0)[b]{$a_9$}}
\put(9.5,3){\makebox(0,0)[b]{$a_{10}$}}
\put(10,3){\makebox(0,0)[bl]{$\rangle$}}
\put(1.5,4.7){\vector(0,-1){1}}
\put(3.5,4.7){\vector(0,-1){1}}
\put(6.5,4.7){\vector(0,-1){1}}
\put(7.5,4.7){\vector(0,-1){1}}
\put(8.5,4.7){\vector(0,-1){1}}
\put(1.5,4.9){\makebox(0,0)[b]{$x_0$}}
\put(3.5,4.9){\makebox(0,0)[b]{$x_1$}}
\put(6.5,4.9){\makebox(0,0)[b]{$x_2$}}
\put(7.5,4.9){\makebox(0,0)[b]{$x_3$}}
\put(8.5,4.9){\makebox(0,0)[b]{$x_4$}}
\put(11,3){\makebox(0,0)[lb]{$\in D[q]^{10}$}}
\put(11,1.5){\makebox(0,0)[lb]{$\in S_{10}$}}
\end{picture}
\]
\caption{\label{fig1} An element $(a,\sigma) \in F_\S(D[q])^{10}$.}
\end{figure}
Figure~\ref{fig1} symbolizes an element $(a,\sigma) \in
F_\S(D[q])^{10}$, with $a = \langle \Rada a0{10} \rangle$ and $\sigma
\in S_{10}$ the permutation determined by the arrows at the bottom
of the picture. The corresponding simple permutation $\chi :=
\kappa(\sigma)$ is
\[
\setlength{\unitlength}{1,5em}
\begin{picture}(3,1)
\thicklines
\put(0,0){\vector(1,1){1}}
\put(1,0){\vector(-1,1){1}}
\put(2,0){\vector(1,1){1}}
\put(3,0){\vector(-1,1){1}}
\put(4,.5){\makebox(0,0)[l]{$\in S_4$}}
\end{picture}
\]
and $(a,\sigma)$ belongs to the subspace $F^\chi_\S(D[q])_{\Rada x04}$
generated by
\[
\setlength{\unitlength}{1,5em}
\begin{picture}(3,2)
\thicklines
\put(0,0){\vector(1,1){1}}
\put(1,0){\vector(-1,1){1}}
\put(2,0){\vector(1,1){1}}
\put(3,0){\vector(-1,1){1}}
\put(-1,1){\makebox(0,0)[br]{$\langle$}}
\put(-.5,1){\makebox(0,0)[b]{$x_0$}}
\put(.5,1){\makebox(0,0)[b]{$x_1$}}
\put(1.5,1){\makebox(0,0)[b]{$x_2$}}
\put(2.5,1){\makebox(0,0)[b]{$x_3$}}
\put(3.5,1){\makebox(0,0)[b]{$x_4$}}
\put(4,1){\makebox(0,0)[bl]{$\rangle$}}
\put(4.6,.8){\makebox(0,0)[l]{$\in F_\S(D[q])^4$,}}
\end{picture}
\]
where $(x_0,x_1,x_2,x_3,x_4) := (a_2,a_4,a_7,a_8,a_9)$.
}
\end{example}

Let us describe how to decide to which subspace
$F^\chi_\S(D[q])_{\Rada x0m}$ a general $(a,\sigma) \in F_\S(D[q])$
belongs. The permutation $\chi$ is determined easily, it is the simple
permutation $\chi := \kappa(\sigma) \in S_m$ associated to $\sigma$. If
$a = \langle \Rada a0n \rangle$, then $\Rada x0m$ is the sequence
obtained from $\Rada a0n$ by deleting $a_i's$ located between doubled
arrows of $\sigma$, between $\langle$ and a vertical arrow $\uparrow$, 
or between $\uparrow$ and $\rangle$. We believe that everything is
clear from Figure~\ref{fig1}. Let finally $a_\sigma := \langle \Rada x0m
\rangle$. It is easy to verify that then 
\[
F^\chi_\S(D[q])_{\Rada x0m} = \{(a,\sigma) \in  F_\S(D[q]); \
\kappa(\sigma) = \chi,\ a_\sigma = \langle \Rada x0m \rangle \}
\]
which makes~(\ref{eq556}) obvious.

Let $\whD[q]$ be the augmented cosimplicial group obtained by adding
to $D[q]$ the piece $\whD[q]^{-1}$ spanned by the empty bracket
$\langle \ \rangle$ and $d_0: \whD[q]^{-1} \to \whD[q]^{0}$ defined by
\[
d_0 \langle \ \rangle := \sum_{0 \leq s \leq q} \langle s \rangle.
\]
We claim that, for each simple permutation $\chi \in S_m$ different
from $\unit \in S_1$, there is an isomorphisms of cochain complexes
\begin{equation}
\label{Mulhouse}
\bs(F^\chi_\S(D[q])_{\Rada x0m})
\cong\ \uparrow^{2m+2}
\bs(\whD[x_0]) \otimes  \bs(\whD[x_1\! -\! x_0]) \otimes \cdots \otimes
\bs(\whD[x_m\!  -\! x_{m-1}])   \otimes  \bs(\whD[q\! -\! x_m ]),
\end{equation} 
where $\uparrow^{2m+2}$ in the right hand side means that 
the degrees are shifted up by $2m+2$. For instance, to $(a,\sigma) \in
F_\S(D[q])^{10}$ in
Figure~\ref{fig1} it corresponds
\begin{eqnarray*}
\lefteqn{
\langle  a_0,a_1  \rangle \otimes 
\langle  a_3  \rangle \otimes \langle  a_5,a_6  \rangle \otimes 
\langle  \  \rangle \otimes \langle \ \rangle \otimes \langle  a_{10}  \rangle
\in}
\\
&& \whD[a_2]^1 \otimes \whD[a_4 - a_2]^0 \otimes \whD[a_7 - a_4]^1
\otimes \whD[a_8 - a_7]^{-1}  \otimes \whD[a_9 - a_8]^{-1} 
\otimes \whD[q - a_9]^{0}. 
\end{eqnarray*}

Isomorphism~(\ref{Mulhouse}) is defined as follows: to each
$(\langle \Rada a0n \rangle,\sigma) \in F^\chi_\S(D[q])_{\Rada x0m}^n$
one associates the element of the tensor product in the right hand
side obtained by replacing,  in $\langle \Rada a0n \rangle$, 
each $x_j \in \{\Rada x0m\}$ by $\rangle \otimes \langle$.

Combining isomorphisms~(\ref{eq98}),~(\ref{eq556}) and~(\ref{Mulhouse}) 
with the obvious equality 
$F^{\unit}_\S(D[q]) \cong D[q]$, we~finally~get:

\begin{lemma}
\label{l1}
For each $q \geq 1$, one has the isomorphism of cochain
complexes: 
\begin{eqnarray*}
\lefteqn{
\bs(F_\S(D[q])) \cong}
\\
&& \cong \bs(D[q]) \oplus 
\bigoplus
 \uparrow^{2m+2}
\bs(\whD[x_0]) \otimes  \bs(\whD[x_1\! -\! x_0]) \otimes \cdots \otimes
\bs(\whD[x_m\!  -\! x_{m-1}])   \otimes  \bs(\whD[q\! -\! x_m ]),
\end{eqnarray*}
where the summation in the right hand side is taken over all simple
permutations $\chi \in S_m$ different from $\unit \in
S_1$ and all $0 \leq x_0 \leq \cdots \leq x_m \leq q$. 
\end{lemma}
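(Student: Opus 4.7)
The plan is to assemble the isomorphism from three structural decompositions already present in the excerpt: the splitting (\ref{eq98}) over simple permutations, the refinement (\ref{eq556}) of each $F^\chi_\S(D[q])$ by tuples $0 \leq x_0 \leq \cdots \leq x_m \leq q$, and the tensor-product identification (\ref{Mulhouse}). First I would peel off the summand corresponding to the simple permutation $\chi = \unit \in S_1$. Since $\kappa(\sigma) = \unit$ holds precisely when $\sigma = \unit_n$, the subcomplex $F^{\unit}_\S(D[q])$ consists of pairs $(a,\unit_n)$ on which the $F_\S$ coboundary reduces to the coboundary of $D[q]$, giving the distinguished $\bs(D[q])$ summand in the lemma. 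For every remaining simple $\chi \in S_m$ with $m \geq 1$, chaining (\ref{eq556}) with (\ref{Mulhouse}) rewrites $\bs(F^\chi_\S(D[q]))$ as the direct sum over $0 \leq x_0 \leq \cdots \leq x_m \leq q$ of the shifted tensor products on the right-hand side of the claim.

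The only genuine work is therefore the verification of (\ref{Mulhouse}) for a single pair $(\chi,\Rada x0m)$. The map sending a basis element $(\langle \Rada a0n\rangle,\sigma) \in F^\chi_\S(D[q])_{\Rada x0m}^n$ to the tensor product obtained by replacing each entry $x_j$ in the bracket by $\rangle\otimes\langle$ is visibly bijective on bases; the shift $\uparrow^{2m+2}$ is forced by a degree count, since removing the $m+1$ distinguished entries from the $n+1$ bracket entries leaves $n-m$ entries to be split among $m+2$ bracket factors whose $\whD$-degrees then sum to $n-2m-2$.

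The main obstacle is showing that this bijection intertwines the differentials. I would expand
\[
d^n(\langle \Rada a0n\rangle,\sigma) = \sum_{0 \leq i \leq n+1}(-1)^i
\bigl(d_{\bsigma(i)}\langle \Rada a0n\rangle,\, d_i(\sigma)\bigr)
\]
and, for each $i$, identify which of the $m+2$ bracket factors the insertion $d_{\bsigma(i)}$ acts in. Insertions strictly inside one factor reproduce that factor's own $\whD$ coboundary, while insertions at a seam between two factors produce the extended coboundary $d_0\colon \whD^{-1}\to\whD^0$ on the neighbouring factor — which is precisely why the extended complexes $\whD$ had to be introduced. One also needs to check that $\kappa(\sigma)$ is invariant under all $d_i(\sigma)$, so that $F^\chi_\S$ really is a subcomplex. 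The delicate part of the bookkeeping is matching the signs $(-1)^i$ in the $F_\S$ coboundary against the Koszul signs generated by the shift $\uparrow^{2m+2}$ and by the tensor-product differential. Once this verification is carried out, assembling the $\chi = \unit$ summand with the tensor-product contributions over all other simple $\chi$ and all admissible $\Rada x0m$ yields the lemma.
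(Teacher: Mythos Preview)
Your proposal is correct and follows exactly the paper's own route: the lemma is stated in the paper as the immediate consequence of combining the decomposition~(\ref{eq98}) over simple permutations, the refinement~(\ref{eq556}) over tuples $0\le x_0\le\cdots\le x_m\le q$, the tensor identification~(\ref{Mulhouse}), and the observation $F^{\unit}_\S(D[q])\cong D[q]$. Your discussion of the degree shift and of why the extended complexes $\whD$ are needed is in fact more explicit than the paper's, which asserts~(\ref{Mulhouse}) with only an example and leaves the compatibility with differentials to the reader.
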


\begin{proposition}
\label{p22}
For each $q \geq 0$, $H^{\geq 1}(\bs(F_\S(D[q]))) = 0$. 
\end{proposition}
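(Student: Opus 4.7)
Plan. The cochain complex $\bs(F_\S(D[q]))^*$ has $n$th term $D[q]^n\otimes\bbZ[S_n]$ with coboundary that twists the cosimplicial structure of $D[q]$ by the permutation via $\bar\sigma$. I would prove $H^{\ge 1}=0$ by constructing an explicit extra codegeneracy on this cochain complex, which yields a contracting homotopy in positive degrees.

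The starting point is the classical contracting homotopy on $\bs(D[q])^*$. The simplicial abelian group $M[q]$ admits the extra degeneracy $s_{-1}^{M[q]}(f):=\widehat 0\cdot f$ obtained by prepending the vertex $0\in[q]$, which satisfies $d_0\,s_{-1}^{M[q]}=\id$ and $d_i\,s_{-1}^{M[q]}=s_{-1}^{M[q]}\,d_{i-1}$ for $i\ge 1$. Dualizing gives an extra codegeneracy $s_{-1}^{D[q]}\colon D[q]^{n+1}\to D[q]^n$ with $s_{-1}^{D[q]}\,d_0^{D[q]}=\id$ and $s_{-1}^{D[q]}\,d_i^{D[q]}=d_{i-1}^{D[q]}\,s_{-1}^{D[q]}$ for $i\ge 1$, providing the standard contracting homotopy on $\bs(D[q])^*$.

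The main step is to lift this to an extra codegeneracy $h\colon F_\S(D[q])^{n+1}\to F_\S(D[q])^n$ on the full complex. I would try the ansatz $h(a\otimes\sigma):=s_{-1}^{D[q]}(a)\otimes k(\sigma)$ for an appropriate $\bbZ$-linear map $k\colon\bbZ[S_{n+1}]\to\bbZ[S_n]$. The first identity $h\,d_0=\id$ is equivalent, using $\bar\tau(0)=0$, to $k(\unit\times\tau)=\tau$; this is satisfied by the natural candidate where $k$ deletes the first row and the column $\sigma(1)$ from the permutation matrix of $\sigma$ and re-indexes. The substantive content lies in the second identity $h\,d_i=d_{i-1}\,h$ for $i\ge 1$, which forces the matching $\bar\sigma(i)-1=\overline{k(\sigma)}(i-1)$ on the $D[q]$-side together with $k(d_i^S(\sigma))=d_{i-1}^S(k(\sigma))$ on the $S$-side. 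Here one exploits that $\bar\sigma$ always fixes the endpoints $0,n+1$ of the interval, so that coning-at-vertex-$0$ (captured by $s_{-1}^{D[q]}$) interacts with the permutation twist in a controlled way.

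The main obstacle will be the verification of the second identity $h\,d_i=d_{i-1}\,h$ for $i\ge 1$: a straightforward deletion $k$ satisfies the matching only when $\sigma(1)=1$ (i.e., when $\sigma\in d_0^S(S_n)$), so in general $k$ must be refined --- most likely to a signed combination of deletion operations reflecting the combinatorics of the coface $d_i^S$ (``doubling the $i$-th strand'') --- or $s_{-1}^{D[q]}$ itself must be replaced by a more delicate permutation-dependent cone. Should the direct construction prove intractable, I would fall back on a spectral sequence argument: filter $F_\S(D[q])^n$ by the value $\sigma(1)\in\{1,\dots,n+1\}$ (or by another permutation statistic compatible with the coboundary up to lower-filtration terms); the associated graded assembles copies of the positively acyclic cochain complex $\bs(D[q])^*$ with possibly twisted faces, and the spectral sequence degenerates to give the conclusion.
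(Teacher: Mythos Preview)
Your proposal contains a genuine gap. You correctly identify the obstruction: the extra-codegeneracy ansatz $h(a,\sigma)=(s_{-1}^{D[q]}(a),k(\sigma))$ forces $\sigma(1)=1$ via the endpoint condition $\overline{k(\sigma)}(0)=0$, so the identity $h\,d_i=d_{i-1}\,h$ fails in general. But you do not overcome this obstruction --- you only speculate that $k$ ``must be refined'' to a signed combination, or that $s_{-1}$ should be replaced by something permutation-dependent, without producing either. Your fallback filtration by $\sigma(1)$ is likewise unverified: you do not check that the coboundary is compatible with it, nor that the associated graded is a sum of copies of $\bs(D[q])$. As written, neither route reaches a proof.

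The paper's argument is quite different and avoids the difficulty entirely. It introduces a combinatorial invariant: to each $\sigma\in S_n$ one associates a \emph{simple} permutation $\kappa(\sigma)$ obtained by stripping initial and terminal identity blocks and collapsing doubled strands. The key observation is that $\kappa(d_i(\sigma))=\kappa(\sigma)$ for all $i$, so $F_\S(D[q])$ splits as a genuine direct sum $\bigoplus_\chi F^\chi_\S(D[q])$ over simple permutations $\chi$, not merely a filtration. The $\chi=\unit_1$ summand is $\bs(D[q])$ itself. For $\chi\in S_m$ with $\chi\ne\unit_1$, the summand $F^\chi_\S(D[q])$ further decomposes over sequences $0\le x_0\le\cdots\le x_m\le q$, and each piece is identified with a shifted tensor product
\[
\uparrow^{2m+2}\,\bs(\whD[x_0])\otimes\bs(\whD[x_1-x_0])\otimes\cdots\otimes\bs(\whD[q-x_m]),
\]
where $\whD[p]$ is $D[p]$ augmented in degree $-1$. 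Since each $\bs(\whD[p])$ is acyclic in all degrees, K\"unneth gives acyclicity of these summands; combined with $H^{\ge 1}(\bs(D[q]))=0$ for the trivial summand, this yields the result. The decomposition by $\kappa(\sigma)$ is the key idea your proposal is missing.
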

\begin{proof} It is a
standard fact that $H^i (\bs(D[q])) = 0$ for all $i \geq 1$ and that $H^i
(\bs(\whD[q])) = 0$  for all $i$. The proposition then follows from 
Lemma~\ref{l1} and the K\"unneth theorem.
\end{proof}

\section{Bar construction and proof of Theorem~C}
\label{JARka}

In this section we analyze the bar construction of an associative
algebra and the induced structures on the Hochschild complex. We then
formulate Proposition~\ref{JarkA} relating the operads $\Big$ and
$\Tam$, and prove Theorem~C.

As everywhere in this paper, we work over the ring $\bbZ$ of integers,
though all results and definitions in this section remain valid over
an arbitrary commutative unital ground ring $R$, only ``abelian
group'' must be translated into ``$R$-module.''  Let $\bar* =
B_*(A,A,A)$ be the two-sided bar construction of a unital associative
algebra~$A = (A,\mu,1)$ so that $\bar n = A \ot \otexp An \ot A$ is
the free $A$-bimodule on $\otexp An$, $n \geq 0$. The bar construction
extends into a functor $\bar * : \I S \to \AbiMod$ from the crossed
interval group $\I S$ introduced in Definition~\ref{IS} to the
category $\AbiMod$ of $A$-bimodules as follows.

On objects, $\bar *(\int n) := \bar {n+1}$.  For $a_{-1} \ot a_0 \ot
\cdots \ot a_m \ot a_{m+1} \in \bar {m+1}$ and a~morphism $f : \int m
\to \int n$ in $\IS$ put
\[
\bar *(f)(a_{-1} \ot a_0 \ot \cdots \ot a_m \ot a_{m+1}) :=
\overline a_{-1} \ot \overline a_0 \ot \cdots \ot \overline a_n \ot
\overline a_{n+1} \in \bar {n+1},
\]
where $\overline a_i$ is, for $-1 \leq i \leq n+1$, the product of
$a_j$'s, $j \in f^{-1}(i)$, in the order specified by the linear order
on the fiber  $f^{-1}(i)$. 
If $f^{-1}(i) = \emptyset$ we put $\overline a_i := 1$.

It is easy to see that the composition $\Delta^\op
\stackrel{\joy}{\cong} \I \hookrightarrow \I S \stackrel
{\bar*}\longrightarrow \AbiMod$ is the standard simplicial
$A$-$A$-bimodular structure of the bar construction. The functor $\bar
* :\I S \to \AbiMod $ is in fact induced from the {\em symmetric bar
construction\/} $B^{\it sym}_*(A) : \Delta S \to \Ab$ introduced
in~\cite{fiedorowicz}, by the~diagram

\[
\Square 
{\IS}{\AbiMod}{\Delta S}{\Ab}%
{\bar*}\iota{\it forget}{B^{\it sym}_*(A)}
\]   
in which $\iota$ is the inclusion~(\ref{Napise_Magda_jeste?}) and
$\Ab$ the category of abelian groups. 

Consequently, for any $A$-bimodule $M$, the space $C^*(A;M) :=
\Hom_{\mbox{\scriptsize $A$-$A$}}(\bar *,M) \cong \Lin(\otexp A*,M)$
of Hochschild cochains of $A$ with coefficients in $M$ is an $(\I
S)^\op$-abelian group.  This structure will be crucial for us.

Let us turn our attention to the $\bbN$-coloured operad $B$ from
Section~\ref{Jarce_umrela_maminka.}. As we already noted, the spaces
$B^l_k$ (i.e.~$B^l_{\Rada k1n}$ with $n=1$ and $k = k_1$) are the
$\Hom$-sets of the underlying category $\calU(B)$ of this coloured operad.  It
turns out that this category is in fact a linearization of $(\I
S)^{\op}$, by which we mean that
\[
\Span(\IS ( \int {l-1},\int{k-1}) \stackrel{\cong}\longrightarrow B^l_k
\]
for each $k,l \geq 0$, where $\Span(-)$ denotes the free abelian group
functor.  

To describe the above isomorphism, it will be convenient to interpret
elements of $B$ as operations on the Hochschild cochain complex of
$A$, so we assume that $A$ is {\em generic\/}.  The isomorphism is
then induced by the set-map that assigns to each morphism $g : \int
{l-1} \to \int{k-1}$ in $\I S$ the natural operation $O_g : \CH k \to
\CH l \in B^l_k$ constructed as follows.

Let $\barg : \{\rada 0{l-1}\} \to \{\rada {-1}{k}\} = \int {k-1}$ be the
restriction $\barg := g|_{\{\rada 0{l-1}\}}$ and denote, for each $-1 \leq
i \leq k$ for which $\barg^{-1} (i)$ is non-empty
\[
\barg^{-1} (i) = (\rada{\sigma^i_1}{\sigma^i_{s(i)}}),
\]
where the numbers $\rada{\sigma^i_1}{\sigma^i_{s(i)}} \in
\{\rada 0{l-1}\}$ are ordered according to the restriction of the
order of the fiber $g^{-1} (i)$ to its subset
$\barg^{-1}(i) \subset  g^{-1} (i)$.

For $f: \otexp Ak \to A\in \CH k$ the cochain $O_g(f) : \otexp Al
\to A\in \CH l$ is given by the formula
\[
O_g(f)(\Rada a0{l-1}) = \bara_{-1}\cdot f(\Rada \bara0{k-1})\cdot \bara_k,
\]
where, for $\Rada a0{l-1} \in A$ and $i \in \{-1,\ldots,k\}$,
\[
\bara_i := \cases {a_{\sigma^i_1} \cdot \ldots \cdot a_{\sigma^i_{s(i)}}}{if
      $\barg^{-1} (i) \not=\emptyset$, and}{1}{otherwise.}
\]
The $(l;k)$-tree encoding the operation $O_g$ is depicted in
Figure~\ref{fig44}. 

\begin{figure}[t]
{% Picture saved by xtexcad 2.4
\thicklines
\unitlength=1.0pt
\begin{picture}(160.00,150.00)(0.00,0.00)
\put(147.00,50.00){\makebox(0.00,0.00){$\barg^{-1}(k)$}}
\put(130.00,0.00){\makebox(0.00,0.00){$\barg^{-1}(k\!-\!1)$}}
\put(35.00,0.00){\makebox(0.00,0.00){$\barg^{-1}(0)$}}
\put(15.00,50.00){\makebox(0.00,0.00){$\barg^{-1}(-1)$}}
\put(70.00,10.00){\makebox(0.00,0.00){$\cdots$}}
\put(125.00,10.00){\makebox(0.00,0.00){$\cdots$}}
\put(35.00,10.00){\makebox(0.00,0.00){$\cdots$}}
\put(145.00,60.00){\makebox(0.00,0.00){$\cdots$}}
\put(17.00,60.00){\makebox(0.00,0.00){$\cdots$}}
\put(100.00,60.00){\line(4,-5){40.00}}
\put(100.00,60.00){\line(-2,3){19.00}}
\put(100.00,60.00){\line(1,-5){10.00}}
\put(60.00,60.00){\line(-1,-5){10.00}}
\put(60.00,60.00){\line(-4,-5){40.00}}
\put(60.00,60.00){\line(2,3){19.00}}
\put(80.00,120.00){\line(-4,-3){80.00}}
\put(80.00,120.00){\line(-5,-6){50.00}}
\put(80.00,120.00){\line(5,-6){50.00}}
\put(80.00,120.00){\line(4,-3){80.00}}
\put(85.00,58.00){\makebox(0.00,0.00){$\cdots$}}
\put(68.00,52.00){\makebox(0.00,0.00)[t]{\scriptsize$1$}}
\put(112.00,60.00){\makebox(0.00,0.00)[b]{\scriptsize$k\!-\!1$}}
\put(53.00,60.00){\makebox(0.00,0.00)[b]{\scriptsize$0$}}
\put(75.00,120.00){\makebox(0.00,0.00)[br]{\scriptsize$(-1,k)$}}
\put(100.00,60.00){\makebox(0.00,0.00){$\bullet$}}
\put(60.00,60.00){\makebox(0.00,0.00){$\bullet$}}
\put(80.00,120.00){\makebox(0.00,0.00){$\bullet$}}
\put(70.00,58.00){\makebox(0.00,0.00){$\bullet$}}
\put(80.00,90.00){\makebox(0.00,0.00){\large$\circ$}}
\put(80.00,88.00){\line(-1,-3){10}}
\put(80.00,150.00){\line(0,-1){57.00}}
\end{picture}}
\caption{\label{fig44} 
The $(l;k)$-tree representing the operation $O_g \in B^l_k$. It has
one white vertex and $k+1$ black vertices. To each black vertex one
attaches legs as indicated in the picture, labeled by the elements of
the fibers of $\barg$. If the corresponding fiber is empty, the black
vertex becomes a special one -- in the above picture this is 
the vertex labeled $1$.  
}
\end{figure}
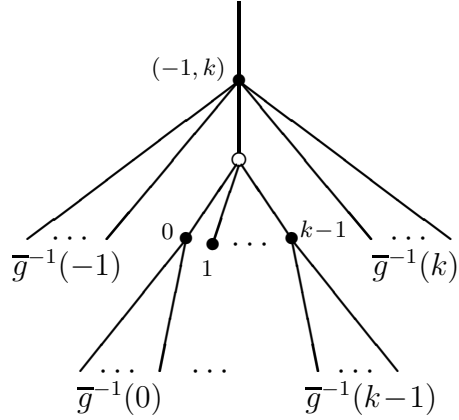

It follows from general properties of coloured operads that $B^l_k$
acts on $B^l_{\Rada k1n}$ covariantly on the upper index and
contravariantly on each lower index.  Therefore, the spaces
$B^l_{\Rada k1n}$ assemble into a functor
\[
\SB^{\bullet}_{\bullet_1,\ldots,\bullet_n} : (\I S)^{\op} \times
(\I S)^n \to \Ab.
\]
Precomposing this functor with the canonical functor 
\[
\Delta\times (\Delta^{\op})^n \rightarrow (\I S)^{\op} \times
(\I S)^n
\] 
we get the functor $B^{\bullet}_{\bullet_1,\ldots,\bullet_n}$
in~(\ref{eq:2}).

\begin{proposition} 
\label{JarkA}
The functor $\SB^{\bullet}_{\bullet_1,\ldots,\bullet_n} : (\I S)^{\op}
\times (\I S)^n \to \Ab$ is the result of application of the functor
$F_{\S}$ described in Lemma~\ref{freeHobject} to the functor
$T^{\bullet}_{\bullet_1,\ldots,\bullet_n}.$ Therefore,
\[
\Big = \uTot(\lTot (F_{\S}T^{\bullet}_{\bullet_1,\ldots,\bullet_n} ))
= \uTot( F_{\S}\lTot(T^{\bullet}_{\bullet_1,\ldots,\bullet_n}) ),
\] 
and the inclusion $\Tam \rightarrow \Big$ is induced by the unit of
the monad $F_{\S} .$ The same relationship holds between $\TN$ and
$\BN$.
\end{proposition}

\begin{proof}
The proposition easily follows from the explicit description of $\Big$ and
$\Tam$ given on pages~\pageref{zacatek}--\pageref{konec} of 
Section \ref{Jarce_umrela_maminka.}.
\end{proof}

We can finally give

\begin{proof}[Proof of Theorem~C]
The inclusion $\Tam \hookrightarrow \Big$ is a weak equivalence by
Theorem~\ref{t2} and Proposition~\ref{JarkA}. It is shown in the proof 
of~\cite[Theorem~5.9]{bbm} that the inclusion $\Br \hookrightarrow \Tam$ is a 
weak equivalence, too.  All vertical
projections $\Big\epi \Norm(\Big)$, $\Tam\epi \NormT$ and $\Br \epi
\Norm(\Br)$ are weak equivalences because they are normalization maps.
So, all morphisms in the two upper triangles and between them are weak
equivalences by two out of three property, which gives~(2).

To establish the homotopy type of the operads in the upper two
triangles of Figure~\ref{fig3}, it is, by~(2), enough to establish the
homotopy type of one of them. 

The fact that $\Tam$ has the
homotopy type of $\SC_{-*}(\Dis)$ follows from
~\cite{BataninBerger}. It was shown, 
in the proof of Proposition~2.14 of that paper, that the operad $T$
(denoted ${\mathcal O}$ there) is isomorphic to the second filtration
$\Lat 2$ of the lattice path operad $\Latnic$. Theorem~3.8
of~\cite{BataninBerger} then claims that the totalization of $\Lat 2$,
isomorphic to $\Tam$ due to the isomorphism $T \cong \Lat 2$, is
an $E_2$-operad in $\Chain$.

Item (4) of the theorem is obvious from the analysis 
in Subsection~\ref{Jarunka} which immediately implies 
that the inclusion $\BN \hookrightarrow \Big$
induces homology isomorphisms $\BN(n) \sim \Big(n)$ for each $n \geq
1$. The same relationship holds also for the suboperads $\TN \subset
\Tam$ and $\widehat{\Br} \subset \Br$. 

Regarding the homotopy type of the operads in the bottom triangle of
Figure~\ref{fig3}, it suffices, given~(4), to prove that
the dg-abelian groups $\BN(0)$ and $\TN(0)$ are acyclic, because
$\widehat{\Br}(0)=0$ by definition.
This was done in~\cite[Example~12]{markl:de}.
\end{proof}

\appendix
\section{Acyclic Models Theorem for cochain functors.}
\label{sec:acycl-models-theor}

We introduce a theory of acyclic models for functors with values in
cochain complexes ({\it cochain functors}).  Due to the lack of a
suitable concept of cofree modules, this theory is not merely 
a~dualization of the chain version.

We present the necessary notions and results  in a 
general setting with an arbitrary complete,  cocomplete closed
symmetric monoidal category $V$ as the underlying 
category, although $V$ will,
for the purposes of this paper, 
always be the category of chain complexes $\Chain$. We choose this
more categorical approach because, in our opinion, it makes
our definitions clearer in the special case $V=\Chain$. 

We refer to the book~\cite{Kelly} for definitions of a category,
functor and natural transformation enriched in a closed symmetric
monoidal category, as well as for a discussion concerning the
difference between enriched and nonenriched natural transformations.
In particular, we remind that there can exist natural transformations
between $V$-functors, which are not $V$-transformations.

Let $(V,\otimes,I)$ be a complete and cocomplete closed symmetric monoidal  category.  
Let $\D$ be a $V$-category. Let $F : \D \to V$ be a  $V$-functor and
$\frakM$ a set of objects of $\D$. Let us denote, for $X \in \D$, by
$\hatF(X)$ the following object of $V$:
\begin{equation}
\label{e1}
\hatF(X):= \underline{V}(\coprod_{M \in \frakM}\uD(X,M),F(M)) 
= \prod_{M \in \frakM} \underline{V}(\uD(X,M),F(M)).
\end{equation}
In the above display, as well as in the rest of this section, 
we used the notation $\underline{\C}(-,-)$ 
for the  enriched $\Hom$-functor of a $V$-category $\C$.

It is obvious that~(\ref{e1}) defines a $V$-functor $\hatF : \D \to
V.$  
There exists the canonical  $V$-enriched transformation $\lambda : F
\to \hatF$ such that, for $X\in \D$ and $M\in \frakM$, 
 the component of $\lambda$  
\[
\lambda(X): F(X) \to \underline{V}(\uD(X,M),F(M))
\]
is the morphism in $V$ which corresponds under adjunction to the evaluation morphism
$$F(K)\otimes \uD(X,M) \stackrel{1\otimes F}{\longrightarrow} F(X)\otimes \underline{V}(F(X),F(M)) \rightarrow F(M).$$

\begin{definition}
\label{d1}
We say that $F: \D \to V$ as above is {\em corepresented with
models\/} $\frakM$ if there exists a (not necessarily $V$-enriched)
transformation $\chi : \hatF \to F$ such that $\chi \lambda = \id_F$.
\end{definition}

Recall~\cite[page~173]{may:equivariant} that an object
$K\in V$ is called {\it strongly dualizable} if the morphism
\[
\nu:K^{\#}\otimes K \rightarrow \underline{V}(K,K)
\] 
is an isomorphism. Here, $K^{\#}: = \underline{V}(K,I)$ and $\nu$ is adjoint
to the morphism 
\[
\underline{V}(K,I)\rightarrow \underline{V}(K\otimes
K, I\otimes K)\simeq \underline{V}(K\otimes K, K).
\] 
Let $\C$ be a small $V$-enriched category.  Assume that, for any two
$a,b\in \C$, the object $\underline{\C}(a,b) \in V$ is strongly
dualizable. Let $C_q, q\in \C$, be a representable presheaf i.e.~the
functor $C_q: \C^{\op}\rightarrow V$ given by $C_q(b) :=
\underline{\C}(b,q)$, for $b \in \C$. Let, finally, $D_q : \C
\rightarrow V$ be the functor defined as $D_q (b) := C_q^{\#}(b)$.

Let $\Dual\subset V$ be the full subcategory of strongly
dualizable objects and
$\Dual^\C$ the enriched category of enriched functors $\C \to \Dual$ and
their enriched natural transformations. Let $q$ be an object of~$\C$.  
An ample source of
examples of corepresented functors is provided by the following lemma.
\begin{lemma}
\label{corepresentable} Let $q\in \C.$
The enriched functor $ev_q: \Dual^\C \rightarrow V$, $ ev_q(X) := X(q)$, is
corepresented  with the model $D_q .$ 
\end{lemma}

\begin{proof} 
Observe that dualization functor $(-)^{\#}$  induces a contravariant isomorphism of categories
$\Dual^\C \cong \Dual^{\C^{op}}.$ Then 
\begin{eqnarray*} \widetilde{ev_q}(X) & = & \underline{V}(\underline{\Dual^\C}(X,D_q),D_q(q))   
\cong \underline{V}(\underline{\Dual^{\C^{op}}}(D^{\#}_q,X^{\#}),\underline{\C}(q,q)^{\#})
\\
\nonumber &\cong& \underline{V}(\underline{\Dual^{\C^{op}}}(C_q,X^{\#}),\underline{\C}(q,q)^{\#})
\cong \underline{V}(X^{\#}(q),\underline{\C}(q,q)^{\#})
\\
\nonumber
&\cong& \underline{V}(\underline{\C}(q,q),X(q)).\end{eqnarray*}
Then we define $\chi:\widetilde{ev_q}(X)\to ev_q(X)$ by applying $\underline{V}(-,X(q))$ to the unit morphism
$I\to \underline{\C}(q,q)$ 
$$\underline{V}(\underline{\C}(q,q),X(q))\to \underline{V}(I,X(q))\cong X(q)= ev_q(X).$$
The equation  $\chi \lambda = \id_{ev_q}$ is obvious.
\end{proof}

The following lemma together with Lemma \ref{corepresentable} allows
to construct new corepresented functors.

\begin{lemma}
\label{endo} 
Let $F: \D \to V$ be corepresented with
models $\frakM$ and let $A:\D\to\D$ be an endofunctor. Then the
composite $F\circ A: \D\to V$ is corepresented with models
$\frakM.$
\end{lemma}

\begin{proof}
By direct verification we establish the equality
$\widetilde{(F\circ A)}(X) = \widetilde{F}(A(X)).$  So we have that
the composite
\[
F\circ A(X)\to \widetilde{(F\circ A)}(X) \to F\circ A(X)
\]
equals the identity.
\end{proof}

For an abelian closed symmetric monoidal category $V$ the category
$\Coch V$ of cochain complexes in $V$ has a canonical $V$-enrichment.
The following theorem is a dual version of the classical {\em Acyclic
Models Theorem\/} (see, for instance,~\cite[Theorem~28.3]{may:67}).  With
our definitions in place, the arguments are standard so we leave the
proof of this theorem as an exercise.

\begin{theorem}
\label{p2}
Let $V$ be an abelian  symmetric monoidal  closed category.
Let $\D$ be a V-category and
$\Coch V$ the category of cochain
complexes in $V$. Let $A^*,B^*
: \D \to \Coch V$ be $V$-functors
and $f^*,g^* : A^* \to B^*$ natural
transformations such that $f^0 = g^0$.
Suppose that

(i) $B^q: \D \to V$ 
is corepresented with models $\frakM$, for each $q \geq 1$,
   
(ii) for $m \geq 1$ and  each $M \in \frakM$ the short exact sequence  
\[
0 \to \Ker(d^m) \to A^m(M)
\stackrel{d^m}{\to} \Im(d^m) \to 0
\]
splits, and

(iii) $H^{\geq 1}(A^*(M),d) = 0$, for each $M \in \frakM$.

\noindent 
Then there exists a natural cochain homotopy $H^* : f^* \sim
g^*$. 
\end{theorem}

When $V=\Chain$, Theorem \ref{p2} implies:

\begin{proposition}
\label{p1}
Let $\D$ be a dg-category and $\CochChain$ the category of cochain
complexes of chain complexes. Let $A^*_*,B^*_* : \D \to \CochChain$ be
dg-functors and $f^*_*,g^*_* : A^*_* \to B^*_*$ (not necessarily dg-enriched) 
transformations such that $f_*^0 = g_*^0$.  Suppose moreover that

(i) $B_*^q: \D \to \Chain$ 
is corepresented with models $\frakM$, for each $q \geq 1$,
   
(ii) $A_*^q(M)$ is a finitely generated torsion-free
chain complex concentrated in degree $0$, 
for each $q \geq 1$, $M \in \frakM$, and

(iii) $H^{\geq 1}(A_0^*(M),d) = 0$, for each $M \in \frakM$.

\noindent 
Then there exists a natural cochain homotopy $H^*_* : f^*_* \sim
g^*_*$ that commutes with the vertical (chain) differentials and  
determines, for each $K \in \D$, a natural cochain homotopy between
the induced maps
$|f^*_*|^*, |g^*_*|^*: |A^*_*(K)|^* \to |B^*_*(K)|^*$.
\end{proposition}

\begin{proof}
The only issue that has to be verified is that (ii) of
Proposition~\ref{p1} implies (ii) of Theorem~\ref{p2}, which is
simple. The rest is obvious.
\end{proof}

\section{Generic algebras}
\label{Jarunka_jeste_spinka}

This section is devoted solely to our proof of the following statement.

\begin{theorem}
\label{snad_mne_neopusti}
The free associative unital algebra $U := \bbT (x_1,x_2,x_3,\ldots)$ generated
by countably many generators $x_1,x_2,x_3,\ldots$ is generic.
\end{theorem}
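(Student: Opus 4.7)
Since Proposition~\ref{definition-of-B} already gives surjectivity of $\omega_U : B \to B(U)$, the task reduces to showing that $\omega_U$ is injective. My plan is to produce, for each $(l;\Rada k1n)$, a single test configuration $(\Rada f1n;\Rada x1l)$ in $U$ with the property that the evaluation map $T \mapsto O_T(\Rada f1n)(\Rada x1l)$ sends distinct $(l;\Rada k1n)$-trees to distinct monomials in $U$; then linear independence of distinct words in the free associative algebra will force any $\sum_T \alpha_T T \in \ker(\omega_U)$ to have all $\alpha_T = 0$.

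The test configuration will be built from the free generators of $U$. For the arguments of the Hochschild cochains, take the first $l$ generators $\Rada x1l$, one per leg. For each white-vertex index $i$, choose $k_i + 1$ further fresh generators $y_{i,0},y_{i,1},\ldots,y_{i,k_i}$, pairwise distinct and disjoint from the $x_j$'s, and define $f_i \in \Lin(\otexp U{k_i},U) = \CH{k_i}|_{A=U}$ by
\[
f_i(b_1,\ldots,b_{k_i}) := y_{i,0}\,b_1\,y_{i,1}\,b_2\cdots b_{k_i}\,y_{i,k_i}.
\]
Evaluating $O_T(\Rada f1n)(\Rada x1l)$ on any $(l;\Rada k1n)$-tree $T$ then propagates the marker letters $y_{i,\bullet}$ through each white vertex, concatenates at each black vertex, and replaces each stub by the algebra unit $1$, which simply collapses between adjacent markers. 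The outcome is a single word $W_T \in U$ that carries all three ingredients of $T$ visibly: the permutation and planar positions of the $x_j$'s reproduce the leg labeling; each $v_i$ contributes the uniquely placed substring $y_{i,0}\cdots y_{i,k_i}$; a stub at slot $j$ of $v_i$ appears as the adjacency $y_{i,j-1}y_{i,j}$ with nothing between them; and a black vertex produces the concatenation of its children's words.

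The core step is to verify that $T\mapsto W_T$ is injective by describing a reconstruction procedure. One scans $W_T$ left to right: because each marker $y_{i,m}$ occurs exactly once, one locates the matching pair $(y_{i,0},y_{i,k_i})$ bounding each white vertex's slot block, recursively parses each slot as either a leg $x_j$, a stub (empty slot), another white subtree, or a concatenation of such (a black vertex), and assembles the planar tree from this data. The conditions in Definition~\ref{d2b} forbidding black--black and black--special adjacencies guarantee that this parsing is unambiguous: black vertices are recovered precisely from those concatenation points where two white/leg subword outputs sit side by side outside any white-vertex block, and no other grouping ambiguities can occur. Assuming this reconstruction, distinct trees yield distinct words $W_T$, and the relation $\sum_T \alpha_T W_T = 0$ in $U$ forces $\alpha_T = 0$ for every $T$ in the support, proving injectivity and hence genericity of $U$.

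The main obstacle is the combinatorial bookkeeping of the last paragraph: one must rule out any coincidence of words $W_T = W_{T'}$ coming from two trees whose black-vertex decompositions happen to produce the same string --- in particular one must distinguish between a white root $v_i$ and a black root whose leftmost child is $v_i$, and handle stubs correctly near the boundary markers $y_{i,0}$ and $y_{i,k_i}$. These distinctions are exactly what the tree axioms (arity $\geq 2$ for black vertices, no black--black or black--special edges) were designed to enforce, so the verification, while fiddly, is forced by the definition.
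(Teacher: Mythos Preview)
Your approach is correct and genuinely different from the paper's. The paper proceeds via a ``dual basis'' construction (Lemma~\ref{l9}): for each fixed tree $T$ it builds tree-specific cochains $f^T_i$ that return a fresh generator $x_{l+i+1}$ \emph{only} when fed the particular monomials that arise as the $i$th white vertex's inputs in $O_T(f^T_1,\ldots,f^T_n)(\Rada x1l)$, and return $0$ on all other monomial inputs; this forces $O_S(f^T_1,\ldots,f^T_n)(\Rada x1l)|_{h^T}$ to vanish unless $S=T$, so each coefficient in a dependence relation is extracted one at a time. You instead use a \emph{single} universal test configuration whose cochains $f_i(b_1,\ldots,b_{k_i}) = y_{i,0}\,b_1\,y_{i,1}\cdots b_{k_i}\,y_{i,k_i}$ act as delimiter insertions, so that $W_T$ is essentially a bracketed string encoding of $T$ in the alphabet $\{x_j,y_{i,m}\}$; injectivity of $T\mapsto W_T$ then kills all coefficients simultaneously by linear independence of distinct monomials in $U$. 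Your route is slicker in that one configuration serves for all trees at once, and it makes transparent why a free algebra on \emph{countably} many generators is needed; the paper's route trades this economy for not having to verify the parsing step you flag as ``fiddly,'' since the Kronecker-delta behaviour is built into the characteristic-function design of the $f^T_i$. Both arguments ultimately lean on the same structural constraints in Definition~\ref{d2b} (black vertices have arity $\ge 2$, no black--black or black--special edges) to make the relevant identification unambiguous.
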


The theorem will follow from Lemma~\ref{l9} below.
The underlying abelian group of $U$ is free, 
with a preferred basis given by monomials. 
{}For $e\in U$ and a monomial $h$ we denote by $e|_h\in \bbZ$ the coefficient 
at $h$ in this preferred basis. 

\begin{lemma}
\label{l9}
For each $(l;\Rada k1n)$-tree $T$ there exist cochains $f^T_i \in
C^{k_i}(U,U)$, $1\leq i \leq n$, 
and a monomial $h^T \in U$ such that, for any $(l;\Rada
k1n)$-tree $S$  
\[
O_S(f^T_1,\ldots,f^T_n)(\Rada x1l)|_{h^T}  =
\cases1{if $S=T$}0{otherwise.}
\]
\end{lemma}

\begin{proof}
The cochains $f^T_i : \otexp U{k_i} \to U$, $1 \leq i \leq
n$, will be determined by a
choice of monomials $u^i_1,\ldots,u^i_{k_i}\in U$ by requiring that
\begin{equation}
\label{Miluje-mne?}
f^T_i(u^i_1,\ldots,u^i_{k_i}) := x_{l+i},\ 1 \leq i \leq n,
\end{equation}
while $f^T_i(\Rada v1{k_i}) = 0$ for all monomials $\Rada v1{k_i}\in U$
such that $(\Rada v1{k_i}) \not = (u^i_1,\ldots,u^i_{k_i})$ in $\otexp
U{k_i}$. We call $x_{l+i+1}$ the {\em value\/} of $f_i$.

The $j$th member $u^i_j$, $1 \leq j \leq k_i$, of the sequence
$u^i_1,\ldots,u^i_{k_i}$ is defined as follows.
Let $m^i_j$ be the $j$th input of $f^T_i$ in
the expression $O_T(f^T_1,\ldots,f^T_n)(\Rada x1l)$.
Clearly, $m^i_j$ is either $1$ or a string made of some $x_s$, $1 \leq
s \leq l$ and/or of some $f_t^T(-,\ldots,-)$, $1 \leq t \leq n$. Then
$u^i_j$ is given by 
replacing all
function symbols in $m^i_j$ by their values, i.e.~by replacing
$f_t^T(-,\ldots,-)$ by $x_{l+t}$. The monomial $h^T$ is the result of
the same procedure applied to $O_T(f^T_1,\ldots,f^T_n)(\Rada x1l)$.

It is obvious that the cochains $\rada{f^T_1}{f^T_n}$ and the monomial
$h^T$ have the required property, because the tree $T$ can be uniquely
reconstructed from them. This finishes the proof of the lemma.

Let us illustrate the above construction on the $(8;3,3,1,3)$-tree in
Figure~\ref{fig2}. Since $l=8$, the values of $f^T_1,f^T_2,f^T_3,f^T_4$ are 
$x_9,x_{10},x_{11},x_{12}$, respectively. One has
\[
O_T(f^T_1,f^T_2,f^T_3,f^T_4)(\Rada x18) =
x_3 f^T_1(f^T_2(x_5x_6,1,x_8),x_1,f^T_3(x_7))f^T_4(x_4,1,x_2).
\]  
The monomials $u^1_1,u^1_2$ and $u^1_3$ are obtained from the inputs
$m^1_1= f^T_2(x_5x_6,1,x_8)$, $m^1_2 = x_1$ and 
$m^1_3 = f^T_3(x_7)$ of $f^T_1$ by replacing all
function symbols by their values, so $(u^1_1,u^1_2,u^1_3) =
(x_{10},x_1,x_{11})$. Therefore $f^T_1$ is defined by 
$f^T_1(x_{10},x_1,x_{11}) := x_9$. Similarly, one gets
\[
f^T_2(x_5x_6,1,x_8) := x_{10},\ f^T_3(x_7) := x_{11},\ 
f^T_4(x_4,1,x_2) := x_{12}  \ \mbox { and } h^T := x_3x_9x_{12}.
\]
\end{proof}

\begin{proof}[Proof of Theorem~\ref{snad_mne_neopusti}]
Since $\omega_A$ is epi, we only need to prove that, for $l,\Rada k1n
\geq 0$ and a linear combination $\sum_{t=1}^s\alpha_t T^t$ of $(l;\Rada
k1n)$-trees, $\omega_A (\sum_{t=1}^s\alpha_tT^t)=0$ implies that
$\rada{\alpha_1}{\alpha_s}=0$. For an arbitrary $r$, $1 \leq r \leq
s$, let $f_{1}^{T^r},\ldots,f_n^{T^r}$ and $h^{T^r}$ be as in
Lemma~\ref{l9} Then
\begin{eqnarray*}
0&=& \textstyle\omega_A (\sum_{t=1}^s\alpha_tT^t)(f_1^{T^r},\ldots,f_n^{T^r})(\Rada
x1n)|_{h^{T^r}}
\\
&=&\textstyle \sum_{t=1}^s \alpha_tO_{T^t}(f_1^{T^r},\ldots,f_n^{T^r})(\Rada
x1n)|_{h^{T^r}} = \alpha_r.
\end{eqnarray*}
Therefore $\alpha_r = 0$ for each $1 \leq r \leq s$.
\end{proof}

\noindent 
{\bf Problem.}
{\rm Give a characterization of generic algebras. We leave as an
 exercise to prove that, for instance, each algebra $A$ which
  contains $U$ as a subalgebra and a direct summand is generic. It is
  also clear that $U$ modulo the ideal generated by $x_i^2$, $i \geq
  1$, is generic. Does there exists a `minimal' (in an appropriate
  sense) generic algebra?  }

%\bibliography{b}

\def\cprime{$'$}

\end{document}